
\documentclass[reqno]{amsproc}

\usepackage{graphicx}

\usepackage{hyperref}
\usepackage{epstopdf}
\hypersetup{
colorlinks=true,
linkcolor=blue,
anchorcolor=blue,
citecolor=blue
}

\usepackage{amssymb}
\usepackage{amsfonts}
\usepackage{amsmath}
\usepackage{dsfont, enumerate}
\newtheorem{theorem}{Theorem}[section]
\newtheorem{lemma}[theorem]{Lemma}
\newtheorem{proposition}[theorem]{Proposition}
\newtheorem{corollary}[theorem]{Corollary}

\theoremstyle{definition}
\newtheorem{definition}[theorem]{Definition}

\newtheorem{remark}[theorem]{Remark}

\numberwithin{equation}{section}


\newcommand{\abs}[1]{\lvert#1\rvert}

\newcommand{\arxiv}[1]{{\tt \href{http://arxiv.org/abs/#1}{arXiv:#1}}}
\newcommand{\set}[1]{\left\{ #1 \right\}}
\DeclareMathOperator*{\esssup}{ess\,sup}
\DeclareMathOperator*{\essinf}{ess\,inf}
\newcommand\restr[2]{{
		\left.\kern-\nulldelimiterspace 
		#1 
		\vphantom{\big|} 
		\right|_{#2} 
}} 

\begin{document}

\title{Geometric implications of fast volume growth and capacity estimates}

\author{Tim Jaschek}
\address{Department of Mathematics, University of British Columbia,
	Vancouver, BC V6T 1Z2, Canada.}
\curraddr{1QB Information Technologies (1QBit), Vancouver, BC V6E 4B1, Canada.}
\email{tim.jaschek@gmail.com }

\author{Mathav Murugan}
\address{Department of Mathematics, University of British Columbia,
	Vancouver, BC V6T 1Z2, Canada. }
\email{mathav@math.ubc.ca}
\thanks{Research partially supported by NSERC (Canada)}

\subjclass[2010]{Primary 60J45, 51F99; Secondary 60J60}



\keywords{Annular connectivity, Poincar\'e inequality, capacity, Harnack inequality}

\begin{abstract}
	We obtain connectivity of annuli for a volume doubling metric measure Dirichlet space which satisfies a Poincar\'e inequality, a capacity estimate and a fast volume growth condition. This type of connectivity was introduced by Grigor'yan and Saloff-Coste in order to obtain stability results for Harnack inequalities and to study diffusions on manifolds with ends. As an application of our result, we obtain   stability of the elliptic Harnack inequality under  perturbations of the Dirichlet form with radial type weights. 
\end{abstract}

\maketitle


\section*{Contents}
1. Introduction

2. Chain connectivity

3. From chain connectivity to path connectivity

4. Applications

References

\section{Introduction}
In this work, we study geometric consequences of analytic properties in the context of metric measure spaces equipped with a Dirichlet form. We are interested in connectivity properties of metric spaces at various scales and locations for spaces that satisfy a  Poincar\'e inequality, an upper bound on capacity, and certain conditions on volume growth. We obtain a connectivity condition on annuli introduced by Grigor'yan and Saloff-Coste \cite{GS1}. A similar connectivity of annuli was used to obtain heat kernel bounds on a family of planar graphs in \cite[Theorem 6.2(d)]{Mur-QS}.

Much of the motivation for our work arises from analysis and probability on fractals. For a large class of
fractal spaces  $(X,d)$, there exists a diffusion process  which
is symmetric with respect to some canonical measure $m$ and exhibits strong sub-diffusive
behavior in the sense that its transition density (heat kernel) $p_{t}(x,y)$ satisfies
the following \emph{sub-Gaussian estimate}:
\begin{equation}\label{e:HKEbeta}
\begin{split}
p_{t}(x,y) &\geq \frac{c_{1}}{m(B(x,t^{1/\beta}))} \exp\biggl(  - c_{2} \Bigl( \frac{d(x,y)^{\beta}}{t} \Bigr)^{\frac{1}{\beta-1}}\biggr),
\\
p_t(x,y) &\leq\frac{c_{3}}{m(B(x,t^{1/\beta}))} \exp\biggl(  - c_{4} \Bigl( \frac{d(x,y)^{\beta}}{t} \Bigr)^{\frac{1}{\beta-1}}\biggr),
\end{split}
\end{equation}
for all points $x,y \in X$ and all $t>0$, where $c_{1},c_{2},c_{3},c_{4}>0$ are some constants,
$d$ is a natural metric on $X$, $B(x,r)$ denotes the open ball of radius
$r$ centered at $x$, and $\beta\geq 2$ is an exponent describing the diffusion called the
\emph{walk dimension}. 
Often, $m$ is a Hausdorff measure  and is Ahlfors $d_f$-regular; that is $m(B(x,r)) \asymp r^{d_f}$ for all $x \in X$ and  $0<r<\operatorname{diam}(X,d)$. The number $d_f$ is called the \emph{volume growth exponent} of the space.
This result was obtained first for the Sierpi\'{n}ski gasket in
\cite{BP}, then for nested fractals in \cite{Kum}, for affine nested fractals in \cite{FHK}
and for Sierpi\'{n}ski carpets in \cite{BB99}. We refer to \cite{Bar98} for a general introduction to diffusions on fractals.

An important motivation for this work arises from a conjecture of Grigor'yan, Hu and Lau  \cite[Conjecture 4.15]{GHL14}, \cite[p. 1495]{GHL} -- see also \cite[Open Problem III]{Kum14}. The conjecture is a characterization of the sub-Gaussian heat kernel estimate \eqref{e:HKEbeta}, in terms of the volume doubling property, a capacity upper bound and a Poincar\'e inequality.
The answer to this conjecture is known only in certain ``low dimensional settings'' (or strongly recurrent case) \cite{BCK}. Recent progress had been made on a family of planar graphs \cite{Mur-QS} and on some transient graphs \cite{Mur-RC} but still under quite restricted assumptions. If we further assume that the measure $m$ is Ahlfors $d_f$-regular, then the setting in \cite{BCK} corresponds to $d_f<\beta$, where $\beta$ is the walk dimension as described above. 
Roughly speaking, we consider spaces that are complementary to the ``strongly recurrent'' regime considered in \cite{BCK}. In the case of polynomial volume growth as described above, our ``fast volume growth'' condition corresponds to the complementary case $d_f \ge \beta$ while \cite{BCK} considers $d_f < \beta$ -- see Definition \ref{d:fvg}.  Since this is the case where \cite[Conjecture 4.15]{GHL14} is still open, we hope our work will simulate further progress on the conjecture of Grigor'yan, Hu and Lau.

We briefly survey some previous related works.
A major milestone in the understanding of heat kernel bounds and Harnack inequalities is the characterization of the parabolic Harnack inequality by the combination of the volume doubling property and the Poincar\'e inequality  due to Grigor'yan and Saloff-Coste \cite{Gri,Sal}. Such a characterization implies the stability of the parabolic Harnack inequality under bounded perturbation of the Dirichlet form.
More recently, the understanding of geometric consequences of analytic properties has played an important role in works on the stability of elliptic Harnack inequality and the singularity of energy measures. In particular, a crucial step in obtaining the stability of elliptic Harnack inequality in \cite{BM} is the result that any geodesic space that satisfies the elliptic Harnack inequality admits a doubling measure. In \cite{KM}, the chain condition plays a role in the proof of the singularity of the energy measures for spaces satisfying the sub-Gaussian heat kernel estimate.  In  \cite{Mur}, the chain condition was obtained as a consequence of the sub-Gaussian heat kernel estimate.

\subsection{Main results}

Throughout this paper, we consider a complete, locally compact separable metric space $(X,d)$,
equipped with a Radon measure $m$ with full support, that is, a Borel measure $m$ on $X$
which is finite on any compact set and strictly positive on any non-empty open set.
Such a triple $(X,d,m)$ is referred to as a \emph{metric measure space}. In what follows, we set
$\operatorname{diam}(X,d):=\sup_{x,y\in X}d(x,y)$ and $B(x,r):=\{y\in X\mid d(x,y)<r\}$ for $x\in X$ and $r>0$.

Let $(\mathcal{E},\mathcal{F})$ be a \emph{symmetric Dirichlet form} on $L^{2}(X,m)$.
In other words, the domain $\mathcal{F}$ is a dense linear subspace of $L^{2}(X,m)$, such that
$\mathcal{E}:\mathcal{F}\times\mathcal{F}\to\mathbb{R}$
is a non-negative definite symmetric bilinear form which is \emph{closed}
($\mathcal{F}$ is a Hilbert space under the inner product $\mathcal{E}_{1}(\cdot,\cdot):= \mathcal{E}(\cdot,\cdot)+ \langle \cdot,\cdot \rangle_{L^{2}(X,m)}$)
and \emph{Markovian} (the unit contraction operates on $\mathcal{F}$, that is, $(u \vee 0)\wedge 1\in\mathcal{F}$ and $\mathcal{E}((u \vee 0)\wedge 1,(u \vee 0)\wedge 1)\leq \mathcal{E}(u,u)$ for any $u\in\mathcal{F}$).
Recall that $(\mathcal{E},\mathcal{F})$ is called \emph{regular} if
$\mathcal{F}\cap\mathcal{C}_{\mathrm{c}}(X)$ is dense both in $(\mathcal{F},\mathcal{E}_{1})$
and in $(\mathcal{C}_{\mathrm{c}}(X),\|\cdot\|_{\mathrm{sup}})$.
Here $\mathcal{C}_{\mathrm{c}}(X)$ is the space of $\mathbb{R}$-valued continuous functions on $X$
with compact support.

For a function $u \in \mathcal{F}$, let $\operatorname{supp}_{m}[u] \subset X$ denote the support of the measure $|u|\,dm$, that is, the smallest closed subset $F$ of $X$ with $\int_{X\setminus F}|u|\,dm=0$. Note that
$\operatorname{supp}_{m}[u]$ coincides with the closure of $X\setminus u^{-1}(\{0\})$ in $X$ if $u$ is continuous.
Recall that
$(\mathcal{E},\mathcal{F})$ is called \emph{strongly local}
if
$\mathcal{E}(u,v)=0$ for any $u,v\in\mathcal{F}$ with $\operatorname{supp}_{m}[u]$, $\operatorname{supp}_{m}[v]$
compact and $v$ is constant $m$-almost everywhere in a neighborhood of $\operatorname{supp}_m[u]$.
The pair $(X,d,m,\mathcal{E},\mathcal{F})$ of a metric measure space $(X,d,m)$ and a strongly local,
regular symmetric Dirichlet form $(\mathcal{E},\mathcal{F})$ on $L^{2}(X,m)$ is termed a \emph{metric measure Dirichlet space}, or a \emph{MMD space}.
We refer to \cite{FOT,CF} for a comprehensive account of the theory of symmetric Dirichlet forms.

We recall the notion of curves and path connectedness  in a metric space.
\begin{definition}[Path connected]{\rm Let $(X,d)$ be a metric space and let $A \subset X$.
		We say that $\gamma$ is a \emph{curve} in $A$ from $x$ to $y$ if $\gamma:[0,1] \to A$ is continuous,  $\gamma(0)=x$ and $\gamma(1)=y$.  
		For two sets $B_1 \subset B_2 \subset X$ we say that $B_1$ is \emph{path connected} in $B_2$ if for all $x,y \in B_1$, there exists a curve in $B_2$ from $x$ to $y$.
}\end{definition}

Henceforth, we fix a function $\Psi:(0,\infty) \to (0,\infty)$ to be a continuous increasing bijection of $(0,\infty)$ onto itself, such that for all $0 < r \le R$,
\begin{equation}  \label{e:reg}
C^{-1} \left( \frac R r \right)^{\beta_1} \le \frac{\Psi(R)}{\Psi(r)} \le C \left( \frac R r \right)^{\beta_2}, 
\end{equation}
for some constants $0 < \beta_1 < \beta_2$ and $C>1$. Throughout this work, the function $\Psi$ is meant to denote the space time scaling of a symmetric diffusion process.  

\begin{definition}[Volume doubling]
	{\rm
		We say that $(X,d,m)$ satisfies the \emph{volume doubling property} \hypertarget{vd}{$\operatorname{(VD)}$} if there exists $C_D \ge 1$ such that
		\begin{equation}\tag*{($\operatorname{VD}$)} 
		m(B(x,2r)) \le C_D m(B(x,r)), \quad \mbox{ for all $x \in X, r >0$.}
		\end{equation}}
\end{definition}

We recall the definition of energy measures associated to a MMD space. Note that $fg\in\mathcal{F}$
for any $f,g\in\mathcal{F}\cap L^{\infty}(X,m)$ by \cite[Theorem 1.4.2-(ii)]{FOT}
and that $\{(-n)\vee(f\wedge n)\}_{n=1}^{\infty}\subset\mathcal{F}$ and
$\lim_{n\to\infty}(-n)\vee(f\wedge n)=f$ in norm in $(\mathcal{F},\mathcal{E}_{1})$
by \cite[Theorem 1.4.2-(iii)]{FOT}.

Let $(X,d,m,\mathcal{E},\mathcal{F})$ be a MMD space.
The \emph{energy measure} $\Gamma(f,f)$ of $f\in\mathcal{F}$ is defined,
first for $f\in\mathcal{F}\cap L^{\infty}(X,m)$ as the unique ($[0,\infty]$-valued)
Borel measure on $X$ with the property that
\begin{equation}\label{e:EnergyMeas}
\int_{X} g \, d\Gamma(f,f)= \mathcal{E}(f,fg)-\frac{1}{2}\mathcal{E}(f^{2},g), \qquad \mbox{ for all $g \in \mathcal{F}\cap\mathcal{C}_{\mathrm{c}}(X)$,}
\end{equation}
and then by
$\Gamma(f,f)(A):=\lim_{n\to\infty}\Gamma\bigl((-n)\vee(f\wedge n),(-n)\vee(f\wedge n)\bigr)(A)$
for each Borel subset $A$ of $X$ for general $f\in\mathcal{F}$. 

The notion of an energy measure can be extended to the \emph{local Dirichlet space} $\mathcal{F}_{\operatorname{loc}}$, which is defined as
\begin{equation}\label{e:Floc}
\mathcal{F}_{\operatorname{loc}} := \biggl\{ f \in L^2_{\operatorname{loc}}(X,m) \biggm|
\begin{minipage}{230pt}
For any relatively compact open subset $V$ of $X$, there exists
$f^{\#} \in \mathcal{F}$ such that $f \mathds{1}_{V} = f^{\#} \mathds{1}_{V}$ $m$-a.e..
\end{minipage}
\biggr\}.
\end{equation}
For any $f \in \mathcal{F}_{\operatorname{loc}}$ and for any relatively compact open set $V \subset X$, we define
\[
\Gamma(f,f)(V) := \Gamma(f^{\#},f^{\#})(V),
\]	
where $f^{\#}$ is as in the definition of $\mathcal{F}_{\operatorname{loc}}$. Since $(\mathcal{E},\mathcal{F})$ is strongly local, the value of $\Gamma(f^{\#},f^{\#})(V)$ does not depend on the choice of $f^{\#}$, and is therefore well defined. Since $X$ is locally compact, this defines a Radon measure $\Gamma(f,f)$ on $X$.

\begin{definition}[Poincar\'e inequality]
	{\rm
		We say that $(X,d,m,\mathcal{E},\mathcal{F})$ satisfies the \emph{Poincar\'e inequality} \hypertarget{pi}{$\operatorname{PI}(\Psi)$}, if there exist constants $C_P,A  \ge 1$ such that 
		for all $x\in X$, $r \in (0, \infty)$ and $f \in \mathcal{F}_{\operatorname{loc}}$
		\begin{equation} \tag*{$\operatorname{PI}(\Psi)$}
		\int_{B(x,r)} (f - \overline f)^2 \,dm  \le C_P \Psi(r) \, \int_{B(x,A r)}d\Gamma(f,f),
		\end{equation}
		where $\overline f= m(B(x,r))^{-1} \int_{B(x,r)} f\, dm$.
}\end{definition}
The following elementary observation will be used along with the Poincar\'e inequality:
\begin{equation}\label{e:var} 
\int_{B(x,r)} (f - \overline f)^2 \,dm = \frac{1}{2 m(B(x,r))} \int_{B(x,r)} \int_{B(x,r)} \abs{f(y)-f(z)}^2 \, m(dy)\,m(dz).
\end{equation}

\begin{definition}[Capacity estimate]
	{\rm
		
		Let   $(X,d,m,\mathcal{E},\mathcal{F})$  be a MMD space.	
		For disjoint subsets $A,B \subset X$, we define
		\[
		\mathcal{F}(A,B):= \set{f \in \mathcal{F}:  
 \begin{minipage}{200pt}	$f \equiv 1$ on a neighborhood of $\overline A$ and  $f \equiv 0$ on a neighborhood of $\overline B$ \end{minipage} } ,
		\]
		and the \emph{capacity} $\operatorname{Cap}(A,B)$ as 
		\[
		\operatorname{Cap}(A,B) := \inf \set{\mathcal E(f,f): f \in \mathcal F(A,B)}.
		\]
		
		We say that  $(X,d,m,\mathcal{E},\mathcal{F})$ satisfies the \emph{capacity estimate} \hypertarget{cap}{$\operatorname{cap}(\Psi)_\le$} 	
		if there exist $C_1,A_1>1$ such that for all $0< r <\operatorname{diam}(X,d)/A_1$, $x \in X$ 
		\begin{equation}\tag*{$\operatorname{cap}(\Psi)_\le$}
		\operatorname{Cap}(B(x,r),B(x,A_1r)^c) \le C_1 \frac{m(B(x,r))}{\Psi(r)}. 
		\end{equation} 
	}
\end{definition}

\begin{definition}[Fast volume growth] \label{d:fvg}{\rm
		We say that $(X,d,m,\mathcal{E},\mathcal{F})$ satisfies the \emph{fast volume growth condition} \hypertarget{fvg}{$\textup{FVG}(\Psi)$}  if there exists a constant $C_F > 0$ such that 
		\begin{equation}
		\tag*{\ensuremath{\textup{FVG}(\Psi)}}
		\label{FVG}
		\frac{\Psi(R)}{\Psi(r)} \le  C_F \frac{m(B(x,R))}{m(B(x,r))},
		\end{equation}
		for all $0<r \le  R < \operatorname{diam}(X,d)$ and $x \in X$.}
\end{definition}

Our main result is the following path connectedness of annuli.
\begin{theorem} \label{t:annuluscurve}
	Let $(X,d,m,\mathcal{E},\mathcal{F})$ be a MMD space that satisfies  \hyperlink{vd}{$\operatorname{(VD)}$}, \hyperlink{pi}{$\operatorname{PI}(\Psi)$}, \hyperlink{cap}{$\operatorname{cap}(\Psi)_\le$} and \hyperlink{fvg}{$\textup{FVG}(\Psi)$}, where $\Psi$ satisfies \eqref{e:reg}.
	Then there exists $C_0 \ge 2$ such that for all $x \in X, r>0$, $B(x,2r) \setminus B(x,r)$ is path connected in $B(x,C_0 r) \setminus B(x,r/C_0)$.
\end{theorem}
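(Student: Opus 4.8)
My plan is to establish the statement in the two stages signalled by the section headings: first a \emph{chain connectivity} of annuli (Section 2), then an upgrade from chains to honest curves (Section 3). Throughout, fix a large constant $C_0$, to be chosen at the end in terms of the structural constants, and write $N:=B(x,C_0r)\setminus B(x,r/C_0)$ for the fattened annulus. Note that by $\operatorname{(VD)}$ and completeness $(X,d)$ is proper, and $\operatorname{PI}(\Psi)$ forces $(X,d)$ to be connected; both facts will be used.

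For the chain connectivity step I would show that for every $\varepsilon>0$ and all $y,z\in B(x,2r)\setminus B(x,r)$ there is an $\varepsilon$-chain from $y$ to $z$ inside $N$, and argue by contradiction. Let $U$ be the set of points of $N$ joined to $y$ by an $\varepsilon$-chain in $N$ and $V:=N\setminus U$; then $U,V$ are nonempty, relatively clopen in $N$, satisfy $d(U,V)\ge\varepsilon$, and each meets $B(x,2r)\setminus B(x,r)$. A purely topological observation handles part of the situation. Since $\overline U\cup\overline V=\overline N$ meets both bounding spheres of $N$, at least one of $U,V$ reaches each sphere; if one of them reached \emph{only} the inner sphere $\{d(\cdot,x)=r/C_0\}$ and the other \emph{only} the outer sphere $\{d(\cdot,x)=C_0r\}$, then (using properness) $U\cup B(x,r/C_0)$ and $V\cup B(x,C_0r)^{c}$ would be nonempty disjoint sets at positive distance covering $X$, contradicting connectedness of $X$. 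Hence one may assume one of $U,V$, say $U$, comes arbitrarily close to both spheres of $N$, while $V$ is a nonempty relatively clopen subset of $N$, meeting at least one sphere, with $d(U,V)\ge\varepsilon$.

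It remains to rule out such a $V$, and this is where the analytic hypotheses enter. The mechanism I would use is to damp $\mathds{1}_{V}$ near the sphere(s) it touches by the cutoff functions furnished by $\operatorname{cap}(\Psi)_{\le}$ — one at the inner sphere at energy cost $\lesssim m(B(x,r/C_0))/\Psi(r/C_0)$, and if necessary one at the outer sphere at cost $\lesssim m(B(x,C_0r))/\Psi(C_0r)$ — obtaining $f\in\mathcal F$ with $\mathcal E(f,f)$ bounded by these capacities; the point, via strong locality and the Leibniz rule for energy measures, is that $\mathds{1}_{V}$ is locally constant away from the spheres because $V$ is relatively clopen in $N$. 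Since $f=1$ near $z$ and $f=0$ near $y$ and on $B(x,r/C_0)$, plugging $f$ into $\operatorname{PI}(\Psi)$ together with \eqref{e:var} gives a lower bound for $\mathcal E(f,f)$ in terms of the mass of $V$ near $B(x,2r)\setminus B(x,r)$, which one then plays against the capacity upper bound. A single scale does not close the argument — an $\varepsilon$-separation can be realised at energy cost of order $m(B(x,C_0r))/\Psi(C_0r)$, which by $\textup{FVG}(\Psi)$ is precisely what $\operatorname{cap}(\Psi)_{\le}$ already allows — so I expect this to be run as an induction (or a ``good scale'' selection) over the dyadic scales between $\varepsilon$ and $r$, in which the disconnection of $N$ forces, scale by scale, a disconnection at the next coarser scale; here $\operatorname{(VD)}$ and especially $\textup{FVG}(\Psi)$, which via \eqref{e:reg} makes $m(B(x,r/C_0))/m(B(x,C_0r))\to0$ as $C_0\to\infty$ so that the inner ball is negligible, are what make the bookkeeping close up, eventually producing a relatively clopen piece of intermediate measure sitting at positive distance from the boundary of a large ball — whose indicator is then locally constant of positive oscillation, directly contradicting $\operatorname{PI}(\Psi)$. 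The same circle of estimates also yields that the $\varepsilon$-chains can be taken of total length $\lesssim r$.

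For Section 3, properness makes the closed annulus $\{u:r/C_0\le d(u,x)\le C_0r\}$ compact; given $y,z\in B(x,2r)\setminus B(x,r)$, take $\varepsilon_n\downarrow0$ and $\varepsilon_n$-chains in $N$ of length $\lesssim r$, reparametrise each by normalised arclength to get uniformly Lipschitz maps of finite subsets of $[0,1]$ into this compact set, and extract a limiting curve from $y$ to $z$ by an Arzel\`a--Ascoli and diagonal argument; carrying out Section 2 for a slightly smaller fattened annulus and then enlarging $C_0$ places the curve inside the open annulus $B(x,C_0r)\setminus B(x,r/C_0)$. I expect the multiscale energy comparison in Section 2 to be the main obstacle: because separating two points at scale $\varepsilon$ is cheap in energy — cheap exactly at the level allowed by $\operatorname{cap}(\Psi)_{\le}$, once $\textup{FVG}(\Psi)$ is taken into account — one cannot contradict the disconnection at any single scale, and must instead show that the obstruction propagates across scales until it becomes a genuine topological splitting that $\operatorname{PI}(\Psi)$ forbids, while simultaneously keeping the chains short enough for the compactness argument of Section 3 to apply.
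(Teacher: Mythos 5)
Your Stage 1 (chain connectivity of the annulus) is where the essential gap lies. You correctly observe that damping $\mathds{1}_V$ at the \emph{outer} sphere costs $\asymp m(B(x,C_0r))/\Psi(C_0r)$, which \hyperlink{fvg}{$\textup{FVG}(\Psi)$} makes comparable to the Poincar\'e lower bound, but you then defer the whole step to an unspecified ``induction over dyadic scales / good scale selection'' that you never carry out; as written, the key analytic assertion is a plan, not a proof. Moreover, your diagnosis that no single-scale argument can close is mistaken: the paper closes it at a single scale by never cutting off at the outer sphere at all. One takes $\psi$ to be a continuous version of $\mathds{1}_{U_y}$ on the wide annulus $B(x,Cr)\setminus B(x,r/C)$ (its energy measure vanishes on the open annulus, by the $\{0,1\}$-valued/absolute-continuity argument), corrects it only at the inner hole via $g=\max(h,\psi)$ with $h$ a near-optimal cutoff for $B(x,2r/C)\subset B(x,r/2)$, and applies \hyperlink{pi}{$\operatorname{PI}(\Psi)$} on $B(x,3r)$ with inflated ball $B(x,3Ar)\subset B(x,Cr)$, so the outer boundary is simply never seen; the only energy inside $B(x,3Ar)$ is $\mathcal{E}(h,h)\lesssim \operatorname{Cap}(B(x,2r/C),B(x,r/2)^c)$. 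The precise role of \hyperlink{fvg}{$\textup{FVG}(\Psi)$} is then Lemma \ref{l:fvg}: by the series law for capacities over nested annuli between $2r/C$ and $r/2$, this inner capacity can be made $\le \delta\, m(B(x,r))/\Psi(r)$ for any $\delta$ by taking $C$ large, which beats the lower bound $c\,m(B(x,r))/\Psi(r)$ coming from \eqref{e:var} applied to the fattened points $B(y,r/(2A))$, $B(z,r/(2A))$ (the fattening itself requires the ball chain-connectivity of Proposition \ref{p:llc1}, which your sketch also omits, since without it $U$ and $V$ could have negligible mass near $y$ and $z$).

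Your Stage 2 also has a genuine gap: the Arzel\`a--Ascoli argument presupposes $\varepsilon_n$-chains in the annulus of total length $\lesssim r$ (about $r/\varepsilon_n$ links), and you assert this follows from ``the same circle of estimates.'' It does not: under \hyperlink{vd}{$\operatorname{(VD)}$}, \hyperlink{pi}{$\operatorname{PI}(\Psi)$} and \hyperlink{cap}{$\operatorname{cap}(\Psi)_\le$} the available count is $N_\varepsilon\lesssim (\Psi(d(x,y))/\Psi(\varepsilon))^{1/2}$ (Theorem \ref{t:chain}), so when $\Psi$ grows faster than quadratically the total length $\varepsilon N_\varepsilon$ blows up as $\varepsilon\to 0$; bounded-length chains (the chain condition) are known only under much stronger hypotheses such as full sub-Gaussian heat kernel bounds. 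Nor can you ``reparametrise by arclength'': between consecutive chain points no short curves are yet known to exist (that is what is being proved), and piecewise-constant parametrizations of unrelated chains are not equicontinuous, so compactness alone does not produce a limit curve. The paper instead proves the \emph{localized, quantitative} chain bound of Theorem \ref{t:chain} (chains staying in $B(x,A_0 d(x,y))$ with a uniformly bounded number of links at each fixed relative scale) and builds the curve by hierarchical refinement at scales $\epsilon^k d(x,y)$ (Corollary \ref{c:curve}); continuity of the limit comes from the geometric decay of scales, not from rectifiability. Without this localized bound, or some substitute for it, your passage from chains to curves does not go through.
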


\begin{remark} \label{r:main}
	{\rm
		\begin{enumerate}
			\item  The condition \hyperlink{fvg}{$\textup{FVG}(\Psi)$} is not necessary for the conclusion of Theorem \ref{t:annuluscurve}. For example, the Brownian motion on the standard two-dimensional Sierpi\'{n}ski carpet satisfies all of the hypotheses except \hyperlink{fvg}{$\textup{FVG}(\Psi)$} and satisfies the conclusion.  
			On the other hand, if the volume growth exponent $d_f$ is strictly less than the walk dimension $\beta$, the gluing construction of Delmotte \cite{Del} shows that \hyperlink{fvg}{$\textup{FVG}(\Psi)$} is a sharp condition. In particular, if  $d_f<\beta$, then by gluing two copies of the same space at a point, one obtains a space that satisfies \hyperlink{vd}{$\operatorname{(VD)}$}, \hyperlink{pi}{$\operatorname{PI}(\Psi)$}, \hyperlink{cap}{$\operatorname{cap}(\Psi)_\le$} with $\Psi(r)=r^\beta$ but fails to satisfy the conclusion.
			\item 
			Our argument is quite flexible and can be localized at different scales. For example, the cylinder $\mathbb{S} \times \mathbb{R}$ satisfies the hypotheses and conclusion of Theorem \ref{t:annuluscurve} only at small enough scales. On the other hand, the cable system (graph with edges represented by copies of the unit interval) corresponding to $\mathbb{Z}^d, d \ge 2$, satisfies the hypotheses and conclusion only at large enough scales.
			\item Theorem \ref{t:annuluscurve} can be applied to obtain stability of elliptic Harnack inequality under perturbations of the Dirichlet form (Theorem \ref{t:weight}). Furthermore, our theorem can be combined with \cite[Theorem 1.1]{Mac} to obtain that the conformal dimension of $(X,d)$ is strictly greater than one.  
		\end{enumerate}	
		
	}
\end{remark}
Much of the proof involves estimating capacities between sets from above and below. Our proof is motivated by the arguments in \cite{Kor,HeKo,Mur}.
The basic idea behind our approach is that if the capacity between two sets is strictly positive, then the two sets cannot belong to different connected components. Using lower bounds on capacities implied by the Poincar\'e inequality, we obtain a connectivity result in Proposition \ref{p:llc1}. This along with upper bounds on capacity leads to a more quantitative estimate in Theorem \ref{t:chain}. Theorem \ref{t:chain} strengthens a recent result in \cite{Mur} which was used to show the chain condition for spaces satisfying sub-Gaussian heat kernel bounds. In this work, Theorem \ref{t:chain} plays a crucial role in upgrading from connectivity to path connectivity. 

\section{Chain connectivity}

We recall the definition of an $\epsilon$-chain in a metric space $(X,d)$.
\begin{definition}[$\epsilon$-chain]{\rm 
		Let $B \subset X$.
		We say that a sequence  $\set{x_i}_{i=0}^N$  of points in $X$ is an \emph{$\epsilon$-chain } in $B$ between points $x,y \in X$ if 
		\[
		x_0=x, \quad x_N=y, \quad \mbox{ and } \quad d(x_i,x_{i+1}) \le \epsilon \quad \mbox{ for all $i=0,1,\ldots,N-1$,}
		\]
		and 
		\[
		x_i \in B, \quad \mbox{ for all $i=0,1,\ldots,N$.}
		\]
		For any $\epsilon>0, B \subset X$ and $x,y \in B$, define
		\[
		N_\epsilon(x,y; B) = \inf \set{ n : {\set{x_i}_{i=0}^{n} \mbox{ is $\epsilon$-chain in $B$ between $x$ and $y$, $n \in \mathbb{N}$} } },
		\] 
		with the usual convention that $\inf \emptyset = 
		+\infty$. 
}\end{definition}
We introduce a notion of connectedness based on the existence of $\epsilon$-chains.
\begin{definition}[Chain connected]{\rm 
		Let $(X,d)$ be a metric space and let $B_1 \subseteq B_2 \subseteq X$. 
		We say that $B_1$ is \emph{chain connected} in $B_2$  if 
		\[
		N_\epsilon(x,y;B_2)<\infty, \quad \mbox{ for all $x,y \in B_1$ and for all $\epsilon>0$.} 
		\]
}\end{definition}
\begin{proposition} \label{p:llc1} Let $(X,d,m,\mathcal{E},\mathcal{F})$ be a MMD such that balls are precompact. Assume that $(X,d,m,\mathcal{E},\mathcal{F})$ satisfies the Poincar\'e inequality \hyperlink{pi}{$\operatorname{PI}(\Psi)$}. Then  for all $x \in X$, $r>0$, $B(x,r)$ is chain connected in $B(x,Ar)$, where $A \ge 1$ is the constant in  \hyperlink{pi}{$\operatorname{PI}(\Psi)$}.
\end{proposition}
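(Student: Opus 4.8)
The plan is to argue by contradiction: a failure of chain connectivity produces a would‑be ``$\epsilon$‑disconnection'' of $B(x,Ar)$, and $\operatorname{PI}(\Psi)$ rules this out. Fix $x\in X$, $r>0$ and suppose $B(x,r)$ is \emph{not} chain connected in $B(x,Ar)$; then there are $p,q\in B(x,r)$ and $\epsilon>0$ with $N_\epsilon(p,q;B(x,Ar))=+\infty$. Set
\[
V:=\set{z\in B(x,Ar): N_\epsilon(p,z;B(x,Ar))<\infty},\qquad W:=B(x,Ar)\setminus V,
\]
so that $p\in V$ and $q\in W$ (both nonempty). The first step is to observe that $V$ and $W$ are open in $X$ and $d(V,W)\ge\epsilon$. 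Indeed, if $z\in V$ and $\delta\in(0,\epsilon)$ is small enough that $B(z,\delta)\subseteq B(x,Ar)$, then appending any point of $B(z,\delta)$ to an $\epsilon$‑chain in $B(x,Ar)$ from $p$ to $z$ yields an $\epsilon$‑chain from $p$ to that point, so $B(z,\delta)\subseteq V$; similarly, if $z\in W$ and $B(z,\delta)\subseteq B(x,Ar)$ with $\delta<\epsilon$, then a point of $B(z,\delta)\cap V$ would force $z\in V$, so $B(z,\delta)\subseteq W$; and if $v\in V$, $w\in W$ had $d(v,w)<\epsilon$, appending $w$ to a chain from $p$ to $v$ would put $w\in V$. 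Hence $\overline V$ and $\overline W$ are disjoint (as $d(\overline V,\overline W)=d(V,W)\ge\epsilon$) closed sets with $\overline V\cup\overline W=\overline{B(x,Ar)}$, and since balls are precompact, $\overline V$ is compact and contained in the open set $X\setminus\overline W$.

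Next I would build a test function. By regularity of $(\mathcal E,\mathcal F)$ — existence of cutoff functions for regular Dirichlet forms — choose $f\in\mathcal F\cap\mathcal C_{\mathrm c}(X)$ with $0\le f\le 1$, $f\equiv 1$ on $\overline V$, and $\operatorname{supp}[f]\subseteq X\setminus\overline W$ (so $f\equiv 0$ on $\overline W$). Since $V$ and $W$ are open and $B(x,Ar)=V\sqcup W$, the function $f$ is locally constant on the open set $B(x,Ar)$ (equal to $1$ on a neighborhood of each point of $V$ and to $0$ on a neighborhood of each point of $W$). By strong locality of $(\mathcal E,\mathcal F)$ the energy measure of an $\mathcal F_{\operatorname{loc}}$ function charges no open set on which the function is $m$‑a.e.\ constant, so $\Gamma(f,f)(V)=\Gamma(f,f)(W)=0$ and therefore $\Gamma(f,f)(B(x,Ar))=0$. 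Applying \hyperlink{pi}{$\operatorname{PI}(\Psi)$} with this $f$ and the ball $B(x,r)$ gives
\[
\int_{B(x,r)}(f-\overline f)^2\,dm\ \le\ C_P\,\Psi(r)\,\Gamma(f,f)(B(x,Ar))\ =\ 0,
\]
so $f=\overline f$ $m$‑a.e.\ on $B(x,r)$.

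To finish, note that $V\cap B(x,r)\ni p$ and $W\cap B(x,r)\ni q$ are nonempty open sets, hence of positive $m$‑measure since $m$ has full support, on which $f\equiv 1$ and $f\equiv 0$ respectively. This forces $\overline f=1$ and $\overline f=0$, a contradiction. Thus no such $p,q,\epsilon$ exist, i.e.\ $B(x,r)$ is chain connected in $B(x,Ar)$.

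The only delicate point I foresee is the construction of the cutoff function $f$: one needs the standard fact that a regular (strongly local) Dirichlet form admits, for every compact $K$ contained in an open set $G$, a function in $\mathcal F\cap\mathcal C_{\mathrm c}(X)$ that equals $1$ on $K$, is supported in $G$, and takes values in $[0,1]$ — and this is precisely where precompactness of balls is used, namely to guarantee that $\overline V$ is compact. The rest is elementary manipulation of $\epsilon$‑chains together with the locality of energy measures and a single application of the Poincar\'e inequality.
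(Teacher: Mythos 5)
Your proposal is correct and takes essentially the same approach as the paper: separate the $\epsilon$-reachable set in $B(x,Ar)$ from its complement by distance $\ge\epsilon$, realize its indicator on $B(x,Ar)$ by a continuous function in $\mathcal{F}$ whose energy measure vanishes on $B(x,Ar)$, and let $\operatorname{PI}(\Psi)$ force constancy on $B(x,r)$. The only differences are cosmetic: you build the cutoff as a single regular-Dirichlet-form cutoff for the compact $\overline{V}$ inside $X\setminus\overline{W}$ and invoke strong locality of the energy measure, arguing by contradiction from a pair $p,q$, whereas the paper takes a finite supremum of ball cutoffs over an $\epsilon/2$-net (precompactness giving finiteness) and uses the energy image density property of \cite[Theorem 4.3.8]{CF} to show directly that the set reachable from the center contains $B(x,r)$.
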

\begin{remark} {\rm
		We note if $(X,d)$ is a complete metric space and if $m$ is a Borel measure that satisfies  \hyperlink{vd}{$\operatorname{(VD)}$}, all metric balls in $(X,d)$ are precompact. We always apply Proposition \ref{p:llc1}, when the metric space is complete and admits a doubling measure.
	}
\end{remark}
\noindent \emph{Proof of Proposition \ref{p:llc1}}. 
Let $A \ge 1$ be the constant in \hyperlink{pi}{$\operatorname{PI}(\Psi)$}.

Let $x \in X, \epsilon >0$ and  $U_x= \set{y \in B(x,Ar): N_\epsilon(x,y; B(x,Ar))< \infty}$.

By definition  of $U_x$, we have
\begin{equation}\label{e:cn1}
d(y,w) > \epsilon, \quad \mbox{for all $y \in U_x, w \in B(x,Ar) \setminus U_x$.}
\end{equation}
Let $N$ denote an $\epsilon/2$-net in $(X,d)$ such that $N \cap B(x,Ar)$ is an $\epsilon/2$-net of $B(x,Ar)$. By the precompactness of metric balls, the set $N \cap B(x,Ar)$ is  finite.
For each $z \in N$, choose a function $\phi_z \in C_c(X) \cap \mathcal{F}$ such that $1 \ge \phi_z \ge 0, \restr{\phi_z}{B(z,\epsilon/2)} \equiv 1$, and $\operatorname{supp}(\phi_z) \subset B(z,\epsilon)$. 
Define
\[
\phi(y)= \sup_{z \in N \cap U_x} \phi_z(y), \quad \mbox{ for all $y \in X$}.
\]
Since $N \cap U_x$ is a finite set, $\phi \in \mathcal{F}$.
By \eqref{e:cn1}, and since $\cup_{z \in N} B(z,\epsilon/2) = X$, we obtain
\begin{equation}\label{e:cn2}
\phi \equiv 1_{U_x}, \quad \mbox{ on $B(x,Ar)$}.
\end{equation}
By \cite[Theorem 4.3.8]{CF}, the push-forward measure of $\Gamma(\phi,\phi)$ by $\phi$ 
is absolutely continuous with respect to the $1$-dimensional Lebesgue measure.
Since $\set{0,1}$ has zero Lebesgue measure, by using \eqref{e:cn2} we obtain
\begin{equation} \label{e:cn3}
\Gamma(\phi,\phi)(B(x,Ar))= 0.
\end{equation}
Since $\phi$ is continuous, by \hyperlink{pi}{$\operatorname{PI}(\Psi)$}, $\phi$ is constant on the ball $B(x,r)$. Therefore $\phi(y)= \phi(x)=1$ for all $y \in B(x,r), r>0$. Therefore $\phi \equiv 1$ on $B(x,r)$, which along with \eqref{e:cn2} implies that $U_x \cap B(x,r) = B(x,r)$. 

Since  $\epsilon>0$ was arbitrary, we obtain that $B(x,r)$ is chain connected in $B(x,Ar)$ for all $x \in X, r>0$.
\qed

The following lemma records an useful consequence of the conditions \hyperlink{cap}{$\operatorname{cap}(\Psi)_\le$}  and \hyperlink{fvg}{$\textup{FVG}(\Psi)$}.
\begin{lemma} \label{l:fvg}
	Let $(X,d,m,\mathcal{E},\mathcal{F})$ be a MMD space that satisfies \hyperlink{vd}{$\operatorname{(VD)}$}, \hyperlink{cap}{$\operatorname{cap}(\Psi)_\le$}  and \hyperlink{fvg}{$\textup{FVG}(\Psi)$}, where $\Psi$ satisfies \eqref{e:reg}. Then for any $\delta>0$, there exist $C,A_\delta>1$ such that
	\begin{equation}
	\operatorname{Cap}(B(x,R/C),B(x,R)^c) \le \delta \frac{m(B(x,R))}{\Psi(R)},
	\end{equation}
	for all $x \in X, 0<R< \operatorname{diam}(X,d)/A_\delta$.
\end{lemma}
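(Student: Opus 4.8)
The plan is to iterate the basic capacity estimate $\operatorname{cap}(\Psi)_\le$ across a chain of concentric balls so that the resulting ``wide'' annulus between $B(x,R/C)$ and $B(x,R)^c$ has small capacity. First I would recall that capacity is subadditive in the following sense: if $B(x,s)\subset B(x,t)\subset B(x,u)$ with $s<t<u$, then a function which is $1$ on $B(x,s)$ and $0$ outside $B(x,t)$ can be rescaled in ``magnitude'' but not directly; instead the right tool is the fact that capacities across nested annuli combine like resistances in series. Concretely, writing $R_j = A_1^{-j} R$ for $j=0,1,\dots,k$ (so $A_1$ is the constant in $\operatorname{cap}(\Psi)_\le$), the standard series law for capacity gives
\[
\operatorname{Cap}\bigl(B(x,R_k),B(x,R_0)^c\bigr)^{-1} \;\ge\; \sum_{j=1}^{k} \operatorname{Cap}\bigl(B(x,R_j),B(x,R_{j-1})^c\bigr)^{-1}.
\]
This is the one nontrivial input and is where I expect the main obstacle to lie: one must justify the series law in the Dirichlet form setting, e.g.\ by building an admissible test function for the outer problem as a suitable affine combination (in the ``potential'' sense) of the optimal or near-optimal test functions for each single annulus, controlling the cross terms using strong locality and the fact that consecutive annuli overlap only at their common boundary sphere. (Alternatively one can cite the equivalence of capacity with a resistance across the annulus and invoke the elementary series law for resistances; either way this step should be stated carefully.)

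Next I would feed in the hypotheses $\operatorname{cap}(\Psi)_\le$ and $\textup{FVG}(\Psi)$. By $\operatorname{cap}(\Psi)_\le$, each term satisfies
\[
\operatorname{Cap}\bigl(B(x,R_j),B(x,R_{j-1})^c\bigr)^{-1} \;\ge\; \frac{1}{C_1}\,\frac{\Psi(R_j)}{m(B(x,R_j))}.
\]
By $\textup{FVG}(\Psi)$ applied with the pair $R_j \le R_0 = R$, we have $m(B(x,R_j)) \le C_F\, m(B(x,R))\,\Psi(R_j)/\Psi(R)$, hence $\Psi(R_j)/m(B(x,R_j)) \ge C_F^{-1}\,\Psi(R)/m(B(x,R))$. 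Summing the $k$ terms,
\[
\operatorname{Cap}\bigl(B(x,R_k),B(x,R_0)^c\bigr)^{-1} \;\ge\; \frac{k}{C_1 C_F}\,\frac{\Psi(R)}{m(B(x,R))},
\]
so that $\operatorname{Cap}(B(x,R_k),B(x,R)^c) \le \frac{C_1 C_F}{k}\,\frac{m(B(x,R))}{\Psi(R)}$.

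Finally, given $\delta>0$ I would choose $k = k(\delta)$ to be any integer with $k \ge C_1 C_F/\delta$, set $C := A_1^{k}$ and $A_\delta := A_1\,A_1^{k} = A_1^{k+1}$ (the latter ensures that for $R < \operatorname{diam}(X,d)/A_\delta$ every ball $B(x,R_{j-1})$ with $j\le k$ has radius $R_{j-1}\le R < \operatorname{diam}(X,d)/A_1$, so that $\operatorname{cap}(\Psi)_\le$ is applicable to each consecutive pair). Then $B(x,R/C) = B(x,R_k)$ and the displayed bound becomes exactly
\[
\operatorname{Cap}\bigl(B(x,R/C),B(x,R)^c\bigr) \;\le\; \frac{C_1 C_F}{k}\,\frac{m(B(x,R))}{\Psi(R)} \;\le\; \delta\,\frac{m(B(x,R))}{\Psi(R)},
\]
as required; note $\operatorname{(VD)}$ and \eqref{e:reg} were used only implicitly to guarantee that all the quantities above are finite and positive and that $m(B(x,R_j))$ is comparable across scales, though in fact the argument above uses $\textup{FVG}(\Psi)$ directly in place of that comparability. $\qed$
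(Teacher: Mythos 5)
Your proposal is correct and follows essentially the same route as the paper: iterate $\operatorname{cap}(\Psi)_\le$ over the scales $A_1^{-j}R$, use \hyperlink{fvg}{$\textup{FVG}(\Psi)$} to bound each annular capacity by a uniform multiple of $m(B(x,R))/\Psi(R)$, combine them via the series law for capacities (which the paper obtains from strong locality, citing \cite[Lemma 2.5]{GNY}, exactly the statement you flagged as the nontrivial input), and take $k\gtrsim \delta^{-1}$. Your choice $A_\delta=A_1^{k+1}$ is more generous than necessary but harmless, since the lemma only asserts existence of some $A_\delta>1$.
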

\begin{proof}
	Let $C_1, A_1$ be the constants in \hyperlink{cap}{$\operatorname{cap}(\Psi)_\le$}, and let $C_F$ denote the constant in \hyperlink{fvg}{$\textup{FVG}(\Psi)$}. 
	
	Therefore, for all $x \in X, 0<r \le R < \operatorname{diam}(X,d)/A_1$, we have 
	\begin{equation}\label{e:fv1}
	\operatorname{Cap}(B(x,r),B(x,A_1 r)^c) \le C_1 \frac{m(B(x,r))}{\Psi(r)} \le C_1 C_F\frac{m(B(x,R))}{\Psi(R)}.
	\end{equation}
	Therefore, by the strong locality (see \cite[Lemma 2.5]{GNY}) for any $k \in \mathbb{N}$
	\begin{align} \label{e:fv2}
	\operatorname{Cap}(B(x,A_1^{-k}R),B(x,R)^c) & \le \left( \sum_{i=0}^{k-1} \operatorname{Cap}(B(x,A_1^{-i-1}R),B(x,A_1^{-i}R)^c) ^{-1}  \right)^{-1} \nonumber \\
	& \le \left( \sum_{i=0}^{k-1} \frac{\Psi(R)}{C_1 C_F m(B(x,R))} \right)^{-1} \quad \mbox{(by \eqref{e:fv1})} \nonumber \\
	& \le \frac{C_1 C_F}{k} \frac{m(B(x,R))}{\Psi(R)}.
	\end{align}
	By choosing $k \in \mathbb{N}$ large enough so that $k > \delta^{-1} C_1 C_F$, we obtain the desired estimate. 
\end{proof}

In the following proposition, we obtain the chain connectivity of annuli under the same assumption as in our main result in Theorem \ref{t:annuluscurve}.
\begin{proposition} \label{p:chain}
	Let $(X,d,m,\mathcal{E},\mathcal{F})$ be a MMD space that satisfies  \hyperlink{vd}{$\operatorname{(VD)}$}, \hyperlink{pi}{$\operatorname{PI}(\Psi)$}, \hyperlink{cap}{$\operatorname{cap}(\Psi)_\le$} and \hyperlink{fvg}{$\textup{FVG}(\Psi)$}, where $\Psi$ satisfies \eqref{e:reg}.
	Then there exist $C,K \ge 2$ such that for all $x \in X, 0<r <\operatorname{diam}(X,d)/K$, $B(x,2r) \setminus B(x,r)$ is chain connected in $B(x,Cr) \setminus B(x,r/C)$.
\end{proposition}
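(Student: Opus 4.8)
The plan is to combine the qualitative chain connectivity of balls from Proposition \ref{p:llc1} with the quantitative capacity upper bound from Lemma \ref{l:fvg} via the basic principle that sets at positive capacity distance cannot lie in distinct connected components. Fix $x \in X$ and $0 < r < \operatorname{diam}(X,d)/K$ for $K$ to be chosen. By Proposition \ref{p:llc1} applied at scale $\asymp r$, any two points of $B(x,2r)$ can be joined by an $\epsilon$-chain inside $B(x,2Ar)$; the trouble is that such a chain may well dip deep into the inner ball $B(x,r)$, so it does not witness chain connectivity of the \emph{annulus} $B(x,2r)\setminus B(x,r)$ inside a controlled larger annulus. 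The idea is to run a \emph{separation/contradiction} argument: suppose $B(x,2r)\setminus B(x,r)$ were not chain connected in $B(x,Cr)\setminus B(x,r/C)$ for some scale $\epsilon>0$; then (after restricting to a suitable net) it splits into a nonempty clopen-in-the-annulus piece $U$ and its complement, each at distance $>\epsilon$ from the other. I would then build a cutoff function $\phi$ that is $1$ on a neighborhood of (the $\epsilon$-neighborhood of) $U$ and $0$ on a neighborhood of the other piece, exactly as in the proof of Proposition \ref{p:llc1}, but now arranged so that $\phi$ restricted to the annulus distinguishes the two pieces. This $\phi$ can be taken in $\mathcal{F}(A_U, A_V)$ for appropriate subsets $A_U, A_V$, and it gives an upper bound on $\operatorname{Cap}(A_U,A_V)$.

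The decisive step is to derive a \emph{lower} bound on the capacity of the two annular pieces from each other that contradicts what Lemma \ref{l:fvg} and the Poincar\'e inequality allow. Here I expect the structure to be: the Poincar\'e inequality forces any function with small energy on a large ball to be nearly constant on the concentric smaller ball, so a function that takes values $0$ and $1$ on two pieces of the annulus both of which meet $B(x,r)$ cannot have arbitrarily small energy — equivalently the capacity between the two annular pieces is bounded below by $c\, m(B(x,r))/\Psi(r)$ for some fixed $c>0$. On the other hand, the key geometric point is that \emph{both pieces must communicate with the inner ball}: since $B(x,2r)$ is chain-connected in $B(x,Ar)$ by Proposition \ref{p:llc1}, a chain realizing this connectivity, when it leaves one annular piece, must pass through $B(x,r)$ or through the outer region, and one shows the outer escape is precluded by choosing $C$ large using the capacity upper bound of Lemma \ref{l:fvg} (small capacity across a thick spherical shell means a cutoff supported there has tiny energy, so it cannot by itself disconnect the space — formalizing ``you cannot cheaply disconnect across a thick annulus''). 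Combining these, any genuine disconnection of the annulus must be ``seen'' inside $B(x,r/C)$, and making $C$ large enough (depending on the constants $C_D, C_P, A, C_1, A_1, C_F$ and on $\beta_1,\beta_2$) drives the admissible capacity below the Poincar\'e lower bound, a contradiction.

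Concretely, the order of operations I would follow is: (i) reduce to a finite net and a fixed $\epsilon>0$, and set up the putative partition $U \sqcup V$ of the annulus with $d(U,V)>\epsilon$; (ii) invoke Proposition \ref{p:llc1} to get that $B(x,2r)$ is chain connected in $B(x,Ar)$, hence $U$ and $V$ are joined by a chain in $B(x,Ar)$ that must therefore enter $B(x,r)$; (iii) use Lemma \ref{l:fvg} with a small $\delta$ to build a cutoff $\eta$ equal to $1$ on $B(x,r/C')$ and $0$ outside $B(x,C'r)$ with energy $\le \delta\, m(B(x,r))/\Psi(r)$, and use $\eta$ together with the separating function $\phi$ from step (i) to produce an admissible function for $\operatorname{Cap}(B(x,r),\ \text{(far piece)})$ of total energy $\le$ (small) $\times\, m(B(x,r))/\Psi(r)$; (iv) derive from $\operatorname{PI}(\Psi)$ and \eqref{e:var} a lower bound $\operatorname{Cap}(B(x,r),\ \cdot\,) \ge c_0\, m(B(x,r))/\Psi(r)$ for any function that is $0$ on a set of comparable measure inside $B(x,cr)$; (v) choose $C$ (hence $\delta$, hence the net scale irrelevance) so that the upper bound from (iii) is $< c_0\, m(B(x,r))/\Psi(r)$, contradiction; (vi) since $\epsilon$ was arbitrary, conclude chain connectivity of the annulus in $B(x,Cr)\setminus B(x,r/C)$, with $K$ absorbing the requirement $C'r < \operatorname{diam}(X,d)/A_\delta$ etc.

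The main obstacle I anticipate is step (iv) together with the bookkeeping in step (iii): one needs the Poincar\'e lower bound on capacity to be robust enough to apply not to two nicely nested balls but to two \emph{irregular} annular pieces, both of which are only known to intersect $B(x,r)$ — so one must first upgrade ``$U$ meets $B(x,r)$'' to ``$U$ contains a ball of radius $\asymp r$'' (or carries a definite fraction of $m(B(x,cr))$), which is where volume doubling and a covering/pigeonhole argument enter, and one must ensure the cutoff built from $\eta$ and $\phi$ is genuinely admissible (equal to $1$ and $0$ on neighborhoods of the relevant closures, in $\mathcal{F}$) without its energy blowing up at the gluing. Once the lower bound is in the form $c_0\, m(B(x,r))/\Psi(r)$ with $c_0$ depending only on the structural constants, choosing $C$ large via Lemma \ref{l:fvg} to beat it is routine; the quantitative refinement (Theorem \ref{t:chain}) that controls the \emph{length} $N_\epsilon$ of the chain, not just its finiteness, is what will be needed later for upgrading to path connectivity, but for Proposition \ref{p:chain} itself finiteness suffices.
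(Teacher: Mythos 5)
Your overall architecture is the same as the paper's: argue by contradiction, split the annulus into an $\epsilon$-separated piece $U$ and its complement, build from a net a continuous separating function with vanishing energy measure on the annulus (energy image density), bridge the inner hole by a cheap radial cutoff supplied by Lemma \ref{l:fvg}, glue, and contradict a Poincar\'e/\eqref{e:var} lower bound of order $m(B(x,r))/\Psi(r)$ after choosing $\delta$ small and $C$ large. However, there is a genuine gap at precisely the point you flag as the ``main obstacle,'' and the fix you propose would not work. To run the Poincar\'e lower bound you need each of the two pieces to contain a ball of radius $\asymp r$ (or a definite fraction of the measure), and volume doubling plus a covering/pigeonhole argument cannot deliver this: a priori a piece of the putative disconnection could be a single point of measure zero, and no counting argument rules that out. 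The paper's resolution is to apply Proposition \ref{p:llc1} again, but at the \emph{small} scale and centered at the two separated points $y,z\in B(x,2r)\setminus B(x,r)$: since $B(y,r/2)$ and $B(z,r/2)$ lie entirely inside the big annulus $B(x,Cr)\setminus B(x,r/C)$, chain connectivity of $B(y,r/(2A))$ in $B(y,r/2)$ forces the whole ball $B(y,r/(2A))$ into $U$, and likewise $B(z,r/(2A))$ is disjoint from $U$. You invoke Proposition \ref{p:llc1} only to say a chain joining the pieces must pass through the inner ball, which is not the use that is needed (and is not used in the paper's argument at all).

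A second, fixable but real, defect is in your step (iii). The radial cutoff must make its transition \emph{inside} the inner hole: in the paper it equals $1$ on $B(x,2r/C)$ and is supported in $B(x,r/2)$, so that $g=\max(h,\psi)$ is still $0$ on the ball around $z$ and, by strong locality, $\Gamma(g,g)(B(x,3Ar))$ is controlled by $\mathcal{E}(h,h)\le 2\operatorname{Cap}(B(x,2r/C),B(x,r/2)^c)$, which Lemma \ref{l:fvg} makes $\le\delta\, m(B(x,r))/\Psi(r)$ for $C$ large. Your $\eta$, equal to $1$ on $B(x,r/C')$ but vanishing only outside $B(x,C'r)$, has its transition region overlapping the annulus and both pieces, so $\max(\eta,\phi)$ need not vanish near the far piece and the gluing no longer has small energy where it matters. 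Relatedly, the quantity you aim to bound, $\operatorname{Cap}(B(x,r),\text{far piece})$, cannot be small at all, since the far piece can touch $\partial B(x,r)$; the correct comparison is between the energy of the glued function on $B(x,3Ar)$ (small, via the inner capacitor) and the Poincar\'e lower bound coming from the two balls $B(y,r/(2A))$, $B(z,r/(2A))$ on which $g$ is identically $1$ and $0$. With these two corrections your outline becomes the paper's proof; as written, it does not close.
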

\begin{proof}
	Let $A \ge 1$ denote the constant in \hyperlink{pi}{$\operatorname{PI}(\Psi)$}.
	Let $C\ge 3A$ be a constant whose value will be determined later in the proof.
	
	Assume by contradiction that $B(x,2r) \setminus B(x,r)$ is not chain connected in 
	$B(x,Cr) \setminus B(x,r/C)$. Then there exist $y,z \in B(x,2r)\setminus B(x,r)$ and $\epsilon>0$ such that  \[ N_\epsilon(y,z; B(x,Cr) \setminus B(x,r/C)) = \infty. \]

	Define
	\[
	U_y = \set { w \in B(x,Cr)\setminus B(x,r/C): N_\epsilon(w,y;  B(x,Cr)\setminus B(x,r/C))<\infty},
	\]
	and 
	\[
	V_y= \left( B(x,Cr)\setminus B(x,r/C)\right) \setminus U_y.
	\]
	By our assumption $z \notin U_y$. By Proposition \ref{p:llc1}, $B(z,r/(2A))$ is chain connected in $B(z,r/2)$. Using this and $B(z,r/2) \subset B(x,Cr)\setminus B(x,r/C))$,  we have 
	\begin{equation}\label{e:mn1}
	N_\epsilon(w,z;  B(x,Cr)\setminus B(x,r/C)) \le N_\epsilon( w, z; B(z,r/2)) < \infty, \mbox{for all $w \in B(z,r/(2A))$}.
	\end{equation}
	Combining \eqref{e:mn1} with $z \notin U_y$, we have
	\begin{equation}\label{e:mn2}
	B(z,r/(2A)) \cap U_y = \emptyset.
	\end{equation}
	By the same argument as \eqref{e:mn1},  we have
	\begin{equation} \label{e:mn3}
	B(y,r/(2A)) \subset U_y.
	\end{equation}
	
	Let $N$ be an $\epsilon/3$-net of $(X,d)$ such that $N \cap U_y \cap \left( B(x,Cr) \setminus B(x,r/C) \right)$ is an $\epsilon/3$-net of $U_y \cap B(x,Cr) \setminus B(x,r/C)$.
	For each $z \in N$, choose a function $\psi_z \in C_c(X) \cap \mathcal{F}$ such that 
	$0 \le \psi_z \le 1, \restr{\psi_z}{B(z,\epsilon/3)} \equiv 1$, and $\operatorname{supp}(\psi_z) \subset B(z,2\epsilon/3)$. Define
	\[
	\psi = \sup_{z \in N \cap U_y} \psi_z.
	\]
	We observe that
	\begin{equation} \label{e:mn5}
	\psi \in \mathcal{F} \cap C_c(X), \quad 0 \le \psi \le 1,  \quad \restr{\psi}{B(x,Cr)\setminus B(x,r/C)}\equiv  \restr{1_{U_y}}{B(x,Cr)\setminus B(x,r/C)}.
	\end{equation}
	Similar to \eqref{e:cn3}, we deduce that
	\begin{equation}\label{e:mn6}
	\Gamma(\psi,\psi)\left(B(x,Cr)\setminus B(x,r/C)\right) =0.
	\end{equation}
	Let $h \in C_c(B(x,r/2)) \cap \mathcal{F}$ be such that $0 \le h \le 1$, $\restr{h}{B(x, 2r/C)} \equiv 1$, and
	\begin{equation} \label{e:mn7}
	\mathcal{E}(h,h) \le 2 \operatorname{Cap}(B(x,2r/C),B(x,r/2)^c).
	\end{equation}
	Let $g= \max(h,\psi) \in C_c(X) \cap \mathcal{F}$. By \eqref{e:mn5} and the properties of $h$ above, we have
	\begin{equation}\label{e:mn8} 
	g \equiv 1 \quad \mbox{on $B(x,2r/C) \cup U_y$}, \qquad g \equiv 0 \quad \mbox{on $B(x,r/2)^c \cap U_y^c \cap B(x,Cr)$.}
	\end{equation}
	Furthermore, since $d(z_1,z_2) \ge \epsilon$ for all $z_1 \in U_y$ and $z_2 \in V_y$, we have $\psi \equiv 0$ on $\cup_{w \in V_y} B(w,\epsilon/3)$. Therefore $g \equiv h$ on $B(x,2r/C) \bigcup \cup_{w \in V_y} B(w,\epsilon/3)$.
	Hence
	\begin{equation}\label{e:mn9}
	g \equiv h, \quad \mbox{ in a neighborhood of $B(x,r/2) \setminus (B(x,2r/C) \cup U_y)$.}
	\end{equation}
	By \eqref{e:mn6} and strong locality 
	\begin{align} \label{e:mn10}
	\Gamma(g,g)(B(x,3Ar)) &= \Gamma(g,g)(B(x,r/2) \setminus (B(x,2r/C) \cup U_y))  \quad \mbox{(by \eqref{e:mn8})} \nonumber\\
	&= \Gamma(h,h)(B(x,r/2) \setminus (B(x,2r/C) \cup U_y))  \quad \mbox{(by \eqref{e:mn9})} \nonumber \\
	&\le \mathcal{E}(h,h) \quad \mbox{(by \eqref{e:mn5})} \nonumber\\
	& \le  2  \operatorname{Cap}(B(x,2r/C),B(x,r/2)^c)  \quad \mbox{(by \eqref{e:mn6}).}
	\end{align}
	Let $\delta>0$ be an arbitrary constant.
	By \eqref{e:reg}, and Lemma \ref{l:fvg}, we can choose $C\ge 3A$ large enough so that
	\begin{equation}\label{e:mn11}
	\operatorname{Cap}(B(x,2r/C),B(x,r/2)^c) \le \delta \frac{m(B(x,r/2))}{\Psi(r/2)} \le C_2 \delta \frac{m(B(x,r))}{\Psi(r)},
	\end{equation}
	where $C_2$ depends only the constants in \eqref{e:reg}.
	Combining \eqref{e:mn10} and \eqref{e:mn11}, we obtain
	\begin{equation} \label{e:mub}
	\Gamma(g,g)(B(x,3Ar)) \le 2 C_2 \delta \frac{m(B(x,r))}{\Psi(r)}.
	\end{equation}
	Evidently, by \eqref{e:mn2}, \eqref{e:mn3} and \eqref{e:mn8}, $g=\max(h,\psi)$  satisfies 
	\begin{equation}\label{e:mn4}
	g \equiv 1 \mbox{ on $B(y,r/(2A))$}, \quad	 g \equiv 0 \mbox{ on $B(z,r/(2A))$}. 
	\end{equation}
	By the triangle inequality, $B(y,r/(2A)) \cup B(z,r/(2A)) \subset B(x,3r)$. Now, we use the Poincar\'e inequality \hyperlink{pi}{$\operatorname{PI}(\Psi)$} along with \eqref{e:var} to derive the following estimate. Since $g \in \mathcal{F} \cap C(X)$ and satisfies \eqref{e:mn4}, we have
	\begin{align} \label{e:mlb}
	\lefteqn{\Gamma(g,g)\left(B(x,3Ar)\right)} \nonumber
	\\ &\ge \frac{1}{2 C_P m(B(x,3r))\Psi(3r)} \int_{B(x,3r)} \int_{B(x,3r)} \abs{g(p)-g(q)}^2 \, m(dp) \,m(dq) \nonumber \\
	&\ge \frac{1}{2 C_P m(B(x,3r))\Psi(3r)} \int_{B(z,r/(2A))} \int_{B(y,r/(2A))} \abs{g(p)-g(q)}^2 \, m(dp) \,m(dq) \nonumber \\
	&\ge \frac{m(B(y,r/(2A)) m(B(z,r/(2A))}{2 C_P  m(B(x,3r))\Psi(3r)}  \quad \mbox{(using \eqref{e:mn4})} \nonumber \\
	&\ge \frac{m(B(x,r))}{C_1 \Psi(r)}, \quad \mbox{(by \eqref{e:reg} and \hyperlink{vd}{$\operatorname{(VD)}$})}
	\end{align}
	where the constant $C_1 \ge 1$ depends only on the constants in \eqref{e:reg}, \hyperlink{vd}{$\operatorname{(VD)}$}, and \hyperlink{pi}{$\operatorname{PI}(\Psi)$}.
	

	By choosing $\delta< (2 C_1 C_2)^{-1}$, the bounds \eqref{e:mub} and \eqref{e:mlb} lead to the desired contradiction.
\end{proof}

\section{From chain connectivity to path connectivity}
The following result strengthens a bound on the length of chains given in \cite{Mur}.
The improvement is that the upper  bound was on $N_\epsilon(x,y; X)$ instead of $N_\epsilon(x,y; B(x,A_0 d(x,y))$.
\begin{theorem} \label{t:chain}
	Let $(X,d,m,\mathcal{E},\mathcal{F})$ be a MMD space that satisfies  \hyperlink{vd}{$\operatorname{(VD)}$}, \hyperlink{pi}{$\operatorname{PI}(\Psi)$}, and \hyperlink{cap}{$\operatorname{cap}(\Psi)_\le$}, where $\Psi$ satisfies \eqref{e:reg}.
	Then there exist $C,A_0>1$ such that for all $\epsilon>0$ and for all $x,y \in X$ that satisfy $d(x,y) \ge \epsilon$, we have
	\begin{equation}\label{e:chain}
	N_\epsilon(x,y; B(x,A_0 d(x,y)))^2 \le C \frac{\Psi(d(x,y))}{	\Psi(\epsilon) }.
	\end{equation}
\end{theorem}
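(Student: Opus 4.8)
\emph{Proof proposal.} Write $R=d(x,y)$. I would prove the estimate by induction on the dyadic scale of $R$, the whole point being to feed the (localized) estimate back into itself in order to build a test function that is constant on \emph{macroscopic} balls. Fix $A_0\ge 2A$, where $A$ is the constant in \hyperlink{pi}{$\operatorname{PI}(\Psi)$}; since $y\in B(x,2R)$, Proposition \ref{p:llc1} gives $N_\epsilon(x,y;B(x,A_0R))<\infty$, so only the quantitative bound is at issue. The base case $R=\epsilon$ is trivial, as then $N_\epsilon(x,y;B(x,A_0R))=1$. For the inductive step let $K$ be a large integer fixed at the end (depending only on the structural constants), abbreviate $B'=B(x,A_0R)$ and $N=N_\epsilon(x,y;B')$, and consider the chain-distance level sets $U_j=\{w\in B':N_\epsilon(x,w;B')\le j\}$ together with their analogues $V_j=\{w\in B':N_\epsilon(y,w;B')\le j\}$ based at $y$. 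These are nested, and the only structural fact used is the $\epsilon$-separation $d\bigl(U_j,\,B'\setminus U_{j+1}\bigr)\ge\epsilon$ (adjoining one point to an $\epsilon$-chain of length $j$ in $B'$ lands one in $U_{j+1}$). By the inductive hypothesis at the strictly smaller scale $R/K$ (and trivially when $R/K<\epsilon$), every $w\in B(x,R/K)$ has $N_\epsilon(x,w;B')\le j_0$ and every $w\in B(y,R/K)$ has $N_\epsilon(y,w;B')\le j_1$, with $j_0,j_1\le C_{\mathrm{ind}}\sqrt{\Psi(R/K)/\Psi(\epsilon)}+1$; hence $B(x,R/K)\subseteq U_{j_0}$, and, because $V_{j_1}\cap U_{N-1-j_1}=\emptyset$ by the triangle inequality for $N_\epsilon$, also $B(y,R/K)\subseteq B'\setminus U_{N-1-j_1}$.

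If $N\le j_0+j_1+2$ we are already done, so assume $N>j_0+j_1+2$. Using the $\epsilon$-separation I would choose cutoffs $\varphi_k\in\mathcal F\cap\mathcal C_{\mathrm c}(X)$, $0\le\varphi_k\le1$, equal to $1$ on an $(\epsilon/100)$-neighbourhood of $U_k$ and to $0$ off the $(\epsilon/50)$-neighbourhood of $U_k$ (assembled from $\epsilon$-ball cutoffs as in the proof of Proposition \ref{p:llc1}). Then $\varphi_k\equiv0$ on $B'\setminus U_{k+1}$, and — crucially — the regions where distinct $\varphi_k$ are non-constant are pairwise disjoint inside $B'$ (a point lying within $\epsilon/50$ of $U_k$ lies in $U_{k+1}$, hence at distance $0$ from $U_{k'}$ for all $k'>k$). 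Set $u:=\sum_{k=j_0}^{N-2-j_1}\varphi_k$; then $u\in\mathcal F\cap\mathcal C_{\mathrm c}(X)$, $\Gamma(u,u)=\sum_k\Gamma(\varphi_k,\varphi_k)$, $u\equiv M$ on $B(x,R/K)$ with $M:=N-1-j_0-j_1$ (every $\varphi_k\equiv1$ there), and $u\equiv0$ on $B(y,R/K)$ (every $\varphi_k\equiv0$ there).

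Now estimate $\mathcal E(u,u)=\Gamma(u,u)(X)$ from both sides. For the lower bound, apply \hyperlink{pi}{$\operatorname{PI}(\Psi)$} on $B(x,2R)\supseteq B(x,R/K)\cup B(y,R/K)$ together with \eqref{e:var}:
\[
\mathcal E(u,u)\ \ge\ \Gamma(u,u)(B(x,2AR))\ \ge\ \frac{M^2\,m(B(x,R/K))\,m(B(y,R/K))}{2C_P\,\Psi(2R)\,m(B(x,2R))}\ \gtrsim\ \frac{M^2\,m(B(x,R))}{\Psi(R)},
\]
the last step by \hyperlink{vd}{$\operatorname{(VD)}$} and \eqref{e:reg}; it is essential here that all three balls have radius comparable to $R$, so \emph{no} power of $R/\epsilon$ is lost. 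For the upper bound, each $\varphi_k$ is a sum of $\sim m(Z_k)/m(B(\cdot,\epsilon))$ unit bumps on $\epsilon$-balls covering its transition region $Z_k$, so $\mathcal E(\varphi_k,\varphi_k)\lesssim m(Z_k)/\Psi(\epsilon)$ by \hyperlink{cap}{$\operatorname{cap}(\Psi)_\le$} and \hyperlink{vd}{$\operatorname{(VD)}$}; since the $Z_k$ are boundedly overlapping and contained in $B(x,A_0R+\epsilon)\subseteq B(x,2A_0R)$,
\[
\mathcal E(u,u)=\sum_k\mathcal E(\varphi_k,\varphi_k)\ \lesssim\ \frac{m\bigl(B(x,2A_0R)\bigr)}{\Psi(\epsilon)}\ \lesssim\ \frac{m(B(x,R))}{\Psi(\epsilon)}.
\]
Comparing the two displays yields $M\le C_1\sqrt{\Psi(R)/\Psi(\epsilon)}$ with $C_1$ depending only on the structural constants, whence, using $\Psi(R/K)\le C\,K^{-\beta_1}\Psi(R)$ from \eqref{e:reg} and $\sqrt{\Psi(R)/\Psi(\epsilon)}\ge1$,
\[
N=M+1+j_0+j_1\ \le\ \Bigl(C_1+3+2C_{\mathrm{ind}}\sqrt{C}\,K^{-\beta_1/2}\Bigr)\sqrt{\Psi(R)/\Psi(\epsilon)}.
\]
Choosing $K$ so large that $2\sqrt C\,K^{-\beta_1/2}\le\tfrac12$ and then $C_{\mathrm{ind}}:=2(C_1+3)$ closes the induction, and this proves \eqref{e:chain} with the above $A_0$ and $C=C_{\mathrm{ind}}^2$.

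The step I expect to be the main obstacle is exactly the construction of $u$ and the matching of the two energy bounds: one needs the inductive hypothesis precisely in order to know that $u$ is constant on balls of macroscopic radius $R/K$, for otherwise the Poincar\'e lower bound only sees $\epsilon$-scale balls and loses uncontrolled powers of $R/\epsilon$; and one needs the chain — hence every level set $U_k$ — to be confined to $B(x,A_0R)$ so that the transition regions have total volume comparable to $m(B(x,R))$ rather than to the far larger $m(B(x,N\epsilon))$. This confinement is the content of the localized estimate \eqref{e:chain} and is the genuine improvement over the bound on $N_\epsilon(x,y;X)$ in \cite{Mur}. A routine technical point that still has to be dealt with is the bounded overlap of the transition regions in the thin shell near $\partial B(x,A_0R)$, where the separation of the level sets does not apply directly; this is handled by a small modification of the cutoffs and does not affect the argument.
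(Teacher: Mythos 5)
Your induction-on-scale skeleton, the use of the level sets $U_j$, and the Poincar\'e lower bound (applied on a ball of radius comparable to $R=d(x,y)$, exploiting that $u$ is constant on the macroscopic balls $B(x,R/K)$, $B(y,R/K)$) are all sound, and the arithmetic closing the induction works. The genuine gap is exactly the point you flag and then dismiss as routine: the global energy upper bound $\mathcal E(u,u)\lesssim m(B(x,R))/\Psi(\epsilon)$ for $u=\sum_k\varphi_k$. The pairwise disjointness of the transition regions rests on the implication ``$p\in B'$, $d(p,U_k)\le\epsilon$ $\Rightarrow$ $p\in U_{k+1}$'' (here $B'=B(x,A_0R)$), which is unavailable at points $p\notin B'$ lying within $\epsilon/50$ of $B'$. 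At such a point, whose nearest points of $B'$ have chain distance $k_1$ from $x$, every $\varphi_k$ with $k\ge k_1$ can be in transition simultaneously, and $u$ is forced by the design (all cutoffs vanish at distance $\epsilon/50$ from $B'$) to drop from roughly $N-1-j_1-k_1$ to $0$ across a layer of width $\sim\epsilon$. Since $k_1$ can be as small as $\sim A_0R/\epsilon$, while the quantity being bounded may a priori be as large as $\sqrt{\Psi(R)/\Psi(\epsilon)}\gtrsim (R/\epsilon)^{\beta_1/2}\gg R/\epsilon$ when $\beta_1>2$, this drop can be of size comparable to $M$; on that shell the energy of $u$ can then be of order $M^2 m(\cdot)/\Psi(\epsilon)$ rather than $m(\cdot)/\Psi(\epsilon)$, and \hyperlink{vd}{$\operatorname{(VD)}$} gives no smallness for an $\epsilon$-shell of $\partial B'$. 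Nor is the repair routine: multiplying $u$ by a macroscopic cutoff of $B'$ costs $\sim M^2 m(B')/\Psi(R)$, which is of the same order as your Poincar\'e lower bound and cannot be absorbed without an FVG-type smallness (as in Lemma \ref{l:fvg}) that Theorem \ref{t:chain} does not assume; shrinking the level sets merely relocates the bad sphere. (A lesser point: the per-cutoff bound $\mathcal E(\varphi_k,\varphi_k)\lesssim m(Z_k)/\Psi(\epsilon)$ itself needs a partition-of-unity construction as in Lemma \ref{l:partition} together with a local computation; a max or sum of $\epsilon$-bumps indexed by a net of $U_k$ only gives $m$ of an $\epsilon$-neighbourhood of all of $U_k$ divided by $\Psi(\epsilon)$.)

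The paper circumvents precisely this boundary problem by never requiring a global energy bound and by dispensing with induction: it smooths the chain-distance function $\hat u(z)=N_\epsilon(x,z;B(x,A_0 d(x,y)))$ on a net of the smaller ball $B(x,2(C_P+2)d(x,y))$ using the partition of unity of Lemma \ref{l:partition}, proves only the local energy-density bound $\Gamma(u,u)(B(z,r))\le C\,m(B(z,r))/\Psi(\epsilon)$ for balls centred at $x$ and $y$ with $r\le 2C_P d(x,y)$ (using $|\hat u(w)-\hat u(z)|\le 1$ for $\epsilon$-close net points), and concludes directly from the two-point estimate of Lemma \ref{l:poin}, $|u(x)-u(y)|^2\le C\Psi(R)\bigl(M^m_R\Gamma(u,u)(x)+M^m_R\Gamma(u,u)(y)\bigr)$; the boundary of the region where $\hat u$ is defined never enters because the maximal functions are evaluated only at $x$ and $y$. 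If you wish to salvage your scheme, the cleanest fix is to replace the global energy comparison by this maximal-function two-point estimate; as written, the ``thin shell'' step is the crux of the localization, not a routine modification.
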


For two measures $m, \nu$ on $(X,d)$, for $R> 0, x \in X$, we define a \emph{truncated maximal function}
\begin{equation}
M_R^m \nu(x)= \sup_{0< r < R} \frac{\nu(B(x,r))}{m(B(x,r))}.
\end{equation}
If $\nu \ll m$, then the above expression is the truncated maximal function of the Radon-Nikodym derivative $\frac{d\nu}{dm}$. However, in the lemma below $\nu$ will be the energy measure, and hence the measure $\nu$ and $m$ might be mutually singular \cite{KM}. In the following lemma, $\mathcal{C}(X)$ denotes the space of continuous functions on $X$.
We recall two lemmas from \cite{Mur}.
\begin{lemma}[Two point estimate] \label{l:poin} (see \cite[Lemma 5.15]{HeKo} and \cite[Lemma 2.4]{Mur})
	Let $(X,d,m,\mathcal{E},\mathcal{F})$ be a MMD space that satisfies \hyperlink{vd}{$\operatorname{(VD)}$}, \hyperlink{pi}{$\operatorname{PI}(\Psi)$}, where $\Psi$ satisfies \eqref{e:reg}.
	There exists $C_P>1$ such that for all $x_0 \in X, R>0$, $x, y \in B(x_0, C_P^{-1} R)$, and for all $u \in \mathcal{C}(X) \cap \mathcal{F}_{\operatorname{loc}}$
	\[
	\abs{u(x)-u(y)}^2 \le C \Psi(R) \left(M_R^m \Gamma(u,u)(x)+ M_R^m \Gamma(u,u)(y)\right),
	\]
	where $\Gamma(u,u)$ denotes the energy measure of $u$.
\end{lemma}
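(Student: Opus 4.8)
The plan is to prove this by a standard telescoping (chaining) argument built on the Poincar\'e inequality, in the spirit of the maximal-function characterization of Haj\l asz-type Sobolev estimates. Throughout, write $\bar u_B$ for the $m$-average of $u$ over a ball $B$. Since $u \in \mathcal{C}(X)$, for any point $p$ and any sequence of balls shrinking to $p$ the averages $\bar u_B$ converge to $u(p)$; this is elementary continuity and requires no differentiation theorem. The strategy is to telescope the average of $u$ from a fixed ``top'' ball down to $x$, to do the same at $y$, and then to connect the two top averages through a common ball containing both $x$ and $y$.

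Set $s = d(x,y)$ (the case $x=y$ being trivial), and fix the constant $C_P$ from the statement to satisfy $C_P \ge 4A$, where $A$ is the dilation constant in \hyperlink{pi}{$\operatorname{PI}(\Psi)$}. Since $x,y \in B(x_0, C_P^{-1}R)$ forces $s < 2 C_P^{-1} R$, this choice guarantees that every ball of the form $B(x, 2A\,2^{-k}s)$ or $B(y, 2A\,2^{-k}s)$ has radius strictly less than $R$, hence is admissible inside the truncated maximal function $M_R^m$. Consider the dyadic balls $B_k^x = B(x, 2^{-k}s)$ for $k \ge 0$. Monotonicity of the integral together with \hyperlink{vd}{$\operatorname{(VD)}$} (the radii differ by a factor $2$) gives $\abs{\bar u_{B_{k+1}^x} - \bar u_{B_k^x}} \le C\, m(B_k^x)^{-1}\int_{B_k^x}\abs{u - \bar u_{B_k^x}}\,dm$, and Cauchy--Schwarz followed by \hyperlink{pi}{$\operatorname{PI}(\Psi)$} bounds this by $C\big(\Psi(2^{-k}s)\,\Gamma(u,u)(B(x, A\,2^{-k}s))/m(B_k^x)\big)^{1/2}$. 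Absorbing the enlargement by \hyperlink{vd}{$\operatorname{(VD)}$} (so $m(B(x, A\,2^{-k}s)) \le C\,m(B_k^x)$), the quotient is at most $C\, M_R^m\Gamma(u,u)(x)$ uniformly in $k$, while \eqref{e:reg} gives $\Psi(2^{-k}s) \le C\,\Psi(s)\,2^{-k\beta_1}$. Each telescoping term is therefore at most $C\,\Psi(s)^{1/2}2^{-k\beta_1/2}(M_R^m\Gamma(u,u)(x))^{1/2}$; since $\beta_1 > 0$ the geometric series converges, and summing over $k \ge 0$ yields $\abs{u(x) - \bar u_{B_0^x}} \le C\,\Psi(s)^{1/2}(M_R^m\Gamma(u,u)(x))^{1/2}$. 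The identical computation at $y$ gives $\abs{u(y) - \bar u_{B_0^y}} \le C\,\Psi(s)^{1/2}(M_R^m\Gamma(u,u)(y))^{1/2}$.

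Finally, I connect the two top averages through the ball $B^* = B(x,2s)$, which contains both $B_0^x = B(x,s)$ and $B_0^y = B(y,s)$ (as $d(x,y)=s$) and is comparable in measure to each by \hyperlink{vd}{$\operatorname{(VD)}$}. Exactly as in the telescoping step, $\abs{\bar u_{B_0^x} - \bar u_{B^*}}$ and $\abs{\bar u_{B_0^y} - \bar u_{B^*}}$ are each controlled by a single application of \hyperlink{pi}{$\operatorname{PI}(\Psi)$} on $B^*$ (here $2As < R$ is used so that both maximal functions dominate the resulting quotients), giving a bound of the form $C\,\Psi(s)^{1/2}\big((M_R^m\Gamma(u,u)(x))^{1/2} + (M_R^m\Gamma(u,u)(y))^{1/2}\big)$. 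Combining the three pieces by the triangle inequality, squaring via $(a+b)^2 \le 2a^2 + 2b^2$, and using $\Psi(s) = \Psi(d(x,y)) \le \Psi(R)$ since $\Psi$ is increasing, produces the claimed estimate.

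I expect the main obstacle to be purely one of bookkeeping: tracking the accumulated constants through the reductions (the doubling powers from the enlargements $A\,2^{-k}s$ and from the factor-$2$ radius jumps, and the exponents coming from \eqref{e:reg}) and, above all, fixing $C_P$ large enough that every intermediate ball entering a Poincar\'e step stays within radius $R$, so that the truncated maximal functions $M_R^m$ legitimately dominate each quotient $\Gamma(u,u)(B(\cdot,\rho))/m(B(\cdot,\rho))$ that appears. The comparison of the two distinct centers $x$ and $y$ through the common ball $B^*$ is the single genuinely non-telescoping step and so warrants the most care.
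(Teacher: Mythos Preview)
The paper does not actually prove this lemma: it is stated with references to \cite[Lemma 5.15]{HeKo} and \cite[Lemma 2.4]{Mur} and then used as a black box. Your telescoping/chaining argument is precisely the standard proof from those sources, and it is correct.

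One minor bookkeeping point (which you already flag): your stated threshold $C_P \ge 4A$ suffices for all the $x$-centered balls, but when you compare $\bar u_{B_0^y}$ to $\bar u_{B^*}$ and want the resulting quotient to be dominated by the maximal function centered at $y$, you must pass from $B(x,2As)$ to a ball centered at $y$, which costs an extra $+s$ in radius. So in practice one takes $C_P$ a bit larger (say $C_P \ge 2(2A+1)$), or, more simply, bounds both connecting terms by $M_R^m\Gamma(u,u)(x)$ alone, since the final estimate only needs the sum $M_R^m\Gamma(u,u)(x)+M_R^m\Gamma(u,u)(y)$. Either fix is routine and does not affect the argument.
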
 

\begin{lemma}[Partition of unity] \cite[Lemma 2.5]{Mur} \label{l:partition}
	Let $(X,d,m,\mathcal{E},\mathcal{F})$ be a MMD space that satisfies \hyperlink{vd}{$(\operatorname{VD})$}, and \hyperlink{cap}{$\operatorname{cap}(\Psi)_\le$}. Let $\epsilon>0$ and let $V$ denote any $\epsilon$-net. Let $\epsilon<\operatorname{diam}(X,d)/A_1$, where $A_1 \ge 1$ is the constant in  \hyperlink{cap}{$\operatorname{cap}(\Psi)_\le$}.
	Then, there exists a family of functions $\set{\psi_z: z\in V}$ that satisfies the following properties:
	\begin{enumerate}[(a)]
		\item  $\set{\psi_z: z\in V}$ is partition of unity, that is,  $\sum_{z \in V} \psi_z  \equiv 1$.
		\item For all $z \in V$, $\psi_z \in C_c(X) \cap \mathcal{F}$ with  $0 \le \psi_z \le 1$, $\restr{\psi_z}{B(z,\epsilon/4)} \equiv 1$, and $\restr{\psi_z}{B(z,5\epsilon/4)^c} \equiv 0$.
		\item For all $z \in V$, $z' \in V \setminus \set{z}$, we have $\restr{\psi_{z'}}{B(z,\epsilon/4)} \equiv 0$. 
		\item There exists $C>1$ such that for all $z\in V$,
		\[
		\mathcal{E}(\psi_z,\psi_z) \le C \frac{m(B(z,\epsilon))}{\Psi(\epsilon)}.
		\]
	\end{enumerate}
\end{lemma}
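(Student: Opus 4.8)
The plan is to manufacture the $\psi_z$ in three stages: produce auxiliary ``bumps'' $\widetilde\phi_z$ with the prescribed $1$-set, support and a good energy bound; carve out the cores of the neighbouring net points to force the cross-vanishing; and finally normalize to a partition of unity. Throughout I read ``$\epsilon$-net'' as a maximal $\epsilon$-separated set, so that $d(z,z')\ge\epsilon$ for distinct $z,z'\in V$ while $\{B(z,\epsilon)\}_{z\in V}$ covers $X$. The separation is essential and is in fact forced by the hypotheses: if two net points were closer than $\epsilon/2$ their cores would overlap, and on the overlap (b) and (c) would be contradictory. Separation makes the cores $\{B(z,\epsilon/4)\}$ pairwise disjoint, and, as will be used repeatedly, makes the balls $\{B(z,\epsilon/2)\}$ pairwise disjoint as well. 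By \hyperlink{vd}{$\operatorname{(VD)}$} the net is locally finite: every ball meets only boundedly many of the dilated balls $B(z,5\epsilon/4)$, so all the sums below are locally finite and define functions in $C(X)\cap\mathcal F_{\operatorname{loc}}$.

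The workhorse is an elementary construction of cutoffs out of \hyperlink{cap}{$\operatorname{cap}(\Psi)_\le$}. Given a center $w$ and radii $a<b$ (comparable to $\epsilon$), cover $\overline{B(w,a)}$ by the balls of an $s$-net with $s=(b-a)/A_1$; by \hyperlink{cap}{$\operatorname{cap}(\Psi)_\le$}, the regularity of $(\mathcal E,\mathcal F)$ and \eqref{e:reg}, choose near-optimal $g_i\in C_c(X)\cap\mathcal F$ with $g_i\equiv1$ on $B(u_i,s)$, $\operatorname{supp}g_i\subset B(u_i,A_1s)=B(u_i,b-a)$, and $\mathcal E(g_i,g_i)\le 2C_1\,m(B(u_i,s))/\Psi(s)$. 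Then $g:=\max_i g_i$ equals $1$ on $B(w,a)$ and is supported in $B(w,b)$; by the Markovian (submodularity) inequality $\mathcal E(u\vee v,u\vee v)+\mathcal E(u\wedge v,u\wedge v)\le\mathcal E(u,u)+\mathcal E(v,v)$ one gets $\mathcal E(g,g)\le\sum_i\mathcal E(g_i,g_i)$, and since the number of $u_i$ is bounded by \hyperlink{vd}{$\operatorname{(VD)}$} and $m(B(u_i,s))/\Psi(s)\le C\,m(B(w,\epsilon))/\Psi(\epsilon)$ by \hyperlink{vd}{$\operatorname{(VD)}$} and \eqref{e:reg}, we obtain $\mathcal E(g,g)\le C\,m(B(w,\epsilon))/\Psi(\epsilon)$. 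Applying this with $(w,a,b)=(z,\epsilon,5\epsilon/4)$ yields $\widetilde\phi_z$ (equal to $1$ on $B(z,\epsilon)$, supported in $B(z,5\epsilon/4)$), and with $(z',\epsilon/4,\epsilon/2)$ yields carving cutoffs $\chi_{z'}$ (equal to $1$ on $B(z',\epsilon/4)$, supported in $B(z',\epsilon/2)$), each with energy $\le C\,m(B(z,\epsilon))/\Psi(\epsilon)$.

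Next I carve and normalize. For fixed $z$ only finitely many foreign $z'$ have $d(z,z')<3\epsilon/2$; set
\[
\phi_z=\widetilde\phi_z\cdot\!\!\prod_{\substack{z'\in V\setminus\{z\}\\ d(z,z')<3\epsilon/2}}\!\!(1-\chi_{z'}).
\]
Because $\operatorname{supp}\chi_{z'}\subset B(z',\epsilon/2)$ is disjoint from $B(z,\epsilon/4)$ (as $d(z,z')\ge\epsilon$), the factors are $1$ on the core and $\phi_z\equiv1$ on $B(z,\epsilon/4)$; because $\chi_{z'}\equiv1$ on $B(z',\epsilon/4)$, the $z'$-factor kills $\phi_z$ on each nearby foreign core, while $\widetilde\phi_z$ already vanishes on the far ones. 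Thus $0\le\phi_z\le1$, $\phi_z\equiv1$ on $B(z,\epsilon/4)$, $\operatorname{supp}\phi_z\subset B(z,5\epsilon/4)$, and $\phi_z\equiv0$ on $\bigcup_{z'\ne z}B(z',\epsilon/4)$. Since the $\operatorname{supp}\chi_{z'}$ are pairwise disjoint (separation $\ge\epsilon$, radius $\epsilon/2$), at any point at most one factor is nontrivial, so a two-function Leibniz estimate on energy measures of $[0,1]$-valued functions gives $\mathcal E(\phi_z,\phi_z)\le C\,m(B(z,\epsilon))/\Psi(\epsilon)$. Put $S=\sum_{w}\phi_w$ and $\psi_z=\phi_z/S$. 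The crucial positivity $S\ge1$ follows from the same disjointness: at any $x$, either no $\chi$ is active, in which case $\phi_w=\widetilde\phi_w$ and the $w$ with $x\in B(w,\epsilon)$ contributes $1$; or a single $\chi_{z'}$ is active, and then $x\in B(z',\epsilon)$ forces $\widetilde\phi_{z'}(x)=1$ and, as $z'$'s own product omits $\chi_{z'}$ and no other $\chi$ is active at $x$, $\phi_{z'}(x)=1$. Hence $1\le S\le M$ with $M$ from local finiteness, so $\psi_z\in C_c(X)\cap\mathcal F$ and $\sum_z\psi_z\equiv1$, giving (a); on each core $S=\phi_z=1$, giving (b) and (c); and the chain and Leibniz rules with $1\le S\le M$ reduce $\mathcal E(\psi_z,\psi_z)$ to the bounds on $\mathcal E(\phi_w,\phi_w)$ for the boundedly many $w$ meeting $\operatorname{supp}\phi_z$, giving (d).

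The main obstacle is the energy bookkeeping through the three nonlinear operations --- the maximum defining the bumps, the product defining the carving, and the quotient defining the normalization --- carried out while respecting simultaneously the sharp radial windows of (b) and the cross-vanishing of (c). The maximum is controlled by submodularity together with the doubling bound on the number of small balls; the product and quotient rely on the Leibniz and chain rules for energy measures, the pairwise disjointness of the carving supports (so that only one factor is ever active), and the uniform bound $1\le S\le M$, with every scale comparison absorbed into constants via \eqref{e:reg} and \hyperlink{vd}{$\operatorname{(VD)}$}. A secondary technical point is that \hyperlink{cap}{$\operatorname{cap}(\Psi)_\le$} a priori produces near-optimal cutoffs only in $\mathcal F$; to meet the continuity requirement in (b) one invokes the regularity of $(\mathcal E,\mathcal F)$ to take the $g_i$, and hence all functions above, in $C_c(X)\cap\mathcal F$ at the cost of a harmless constant in the energy bound.
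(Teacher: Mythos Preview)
The paper does not prove this lemma; it is simply quoted from \cite[Lemma~2.5]{Mur} and used as a black box in the proof of Theorem~\ref{t:chain}. So there is no in-paper argument to compare against, and your task was effectively to supply a proof from scratch.

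Your three-stage construction---build bumps $\widetilde\phi_z$ as maxima of small-ball cutoffs coming from \hyperlink{cap}{$\operatorname{cap}(\Psi)_\le$}, carve out the foreign cores by multiplying by $\prod(1-\chi_{z'})$, then normalize by $S=\sum_w\phi_w$---is a correct and standard route. The combinatorics are handled carefully: the $\epsilon$-separation of the net makes the $B(z',\epsilon/2)$'s pairwise disjoint, so at each point at most one carving factor is active, which both keeps the Leibniz estimate to two functions and yields the crucial lower bound $S\ge1$; bounded overlap from \hyperlink{vd}{$\operatorname{(VD)}$} controls all the finite sums; and the submodularity inequality $\mathcal{E}(u\vee v)+\mathcal{E}(u\wedge v)\le\mathcal{E}(u)+\mathcal{E}(v)$ correctly bounds the energy of the maxima.

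The only soft spot is exactly the one you flag at the end. The hypothesis \hyperlink{cap}{$\operatorname{cap}(\Psi)_\le$} bounds an infimum over $\mathcal{F}$, and mere $\mathcal{E}_1$-density of $C_c(X)\cap\mathcal{F}$ does not by itself produce a continuous near-minimizer that is still identically $1$ on the inner ball and identically $0$ off the outer one. One clean fix is to use the characterization of capacity through the core for regular Dirichlet forms (cf.\ \cite[\S2.1]{FOT}) to obtain $v\in C_c(B(u_i,A_1s))\cap\mathcal{F}$ with $v\ge1$ on $\overline{B(u_i,s)}$ and $\mathcal{E}(v,v)$ close to the capacity, and then apply a normal contraction such as $t\mapsto(2t)_+\wedge1$, which equals $1$ on the open set $\{v>1/2\}\supset\overline{B(u_i,s)}$ and costs only a fixed multiplicative constant in energy. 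This step deserves to be written out rather than waved at, since continuity of the $\psi_z$ is genuinely used downstream: Lemma~\ref{l:poin} is applied in the proof of Theorem~\ref{t:chain} to $u=\sum_z\hat u(z)\psi_z\in\mathcal{C}(X)\cap\mathcal{F}_{\operatorname{loc}}$.
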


\noindent{\em Proof of Theorem \ref{t:chain}.} 
Let $A_1$ denote the constant in  \hyperlink{cap}{$\operatorname{cap}(\Psi)_\le$}.
Since $N_\epsilon(x,y;B) \le N_{\epsilon'}(x,y;B)$ whenever $B \subset X$ and $\epsilon' \le \epsilon$, by replacing $\epsilon$ by $\epsilon/(2A_1)$ if necessary and by using \eqref{e:reg}, we assume that $\epsilon< \operatorname{diam}(X,d)/A_1$.

Fix $x,y \in X, \epsilon>0$ such that $d(x,y) \ge \epsilon$. Set $\epsilon'=\epsilon/3$. Let $V$ be an $\epsilon'$-net such that $\set{x,y} \subset V$. 
Let $A_0>0$ be
\begin{equation}\label{e:chain0}
A_0 = 2A (C_P+2),
\end{equation}
where $A$ is the constant in \hyperlink{pi}{$\operatorname{PI}(\Psi)$} and $C_P$ is as given in Lemma \ref{l:poin}.
Define $\hat{u}: V \cap B(x,2 (C_P+2) d(x,y)) \to [0,\infty)$ as 
\begin{equation}\label{e:chain1}
\hat{u}(z):= N_\epsilon(x,z; B(x,A_0 d(x,y)) ).
\end{equation}
We set $\widetilde V=  V \cap B(x,2(C_P+2) d(x,y))$.
By Proposition \ref{p:llc1}, $\hat{u}$ is finite. By definition,
\begin{equation}\label{e:chain2}
\abs{\hat{u}(z_1)- \hat{u}(z_2)} \le 1, \quad \mbox{ for all $z_1,z_2 \in \widetilde V$ such that $d(z_1,z_2)<\epsilon$.}
\end{equation}
Let   $\set{\psi_z: z\in V}$  denote the partition of unity defined in Lemma \ref{l:partition}.
Define  $u:X \to [0,\infty)$ as
\[
u(p):= \sum_{z \in \widetilde V} \hat u(z)\psi_z(p). 
\]
For any ball $B(x_0,r), x_0 \in X, r >0$, by Lemma \ref{l:partition}(b) we have
\begin{equation}\label{e:chain3}
u(p) = \sum_{z \in V \cap B(x_0, r+ 5\epsilon'/4)}  \hat u(z)\psi_z(p),\quad \mbox{ for all $p \in B(x_0,r)$.}
\end{equation}
Since $V \cap B(x_0, r+ 5\epsilon'/4)$ is a finite set by \hyperlink{vd}{$\operatorname{(VD)}$}, we obtain that $u \in \mathcal{F}_{\operatorname{loc}}$.
By Lemma \ref{l:partition}(b), we have $\restr{u}{B(z,\epsilon'/4)}\equiv \hat u(z)$ for all $z \in V$. Therefore, by \cite[Theorem 4.3.8]{CF}, 
the push-forward measure of $\Gamma(u,u)$ by $u$ is absolutely continuous with respect to the $1$-dimensional Lebesgue measure.
Therefore, we obtain
\begin{equation}\label{e:chain4}
\Gamma(u,u)(B(z,\epsilon'/4)) =0, \quad \mbox{ for all $z \in V$.} 
\end{equation}
By \eqref{e:chain3} and Lemma \ref{l:partition}(a), we have
\begin{equation}\label{e:chain5}
u(p)= \hat u (z)+ \sum_{w \in V \cap B(z, 9\epsilon'/4)}  (\hat u(w)- \hat u(z))\psi_w(p),\quad \mbox{ for all $p \in B(z,\epsilon'), z \in V$.}
\end{equation}
By \hyperlink{vd}{$\operatorname{(VD)}$}, there exits $C_1>1$ such that $\sup_{z \in V} \abs{V \cap B(z, 9\epsilon'/4)} \le C_1$. By \eqref{e:chain5}, and the Cauchy-Schwarz inequality, there exists $C_2>1$ such that the following holds: for all $z \in V \cap B(x,2 (C_P+1)d(x,y))$, we have
\begin{align} \label{e:chain6}
\Gamma(u,u)(B(z,\epsilon')) &\le C_1 \sum_{w \in V \cap B(z, 9\epsilon'/4)} (\hat u(w)- \hat u(z))^2 \mathcal{E}(\psi_w,\psi_w) \nonumber \\
&\lesssim \sum_{w \in V \cap B(z, 9\epsilon'/4)}  \frac{m(B(w,\epsilon'))}{\Psi(\epsilon')} \quad \mbox{ (by \eqref{e:chain2} and Lemma \ref{l:partition}(d))} \nonumber \\
&\le C_2 \frac{m(B(z,\epsilon'/2))} {\Psi(\epsilon)} \quad \mbox{ (by \hyperlink{vd}{$\operatorname{(VD)}$} and \eqref{e:reg})}.
\end{align}
In the second line above, we used the fact that  $z \in V \cap  B(x,2 (C_P+1)d(x,y))$ and $w \in  V \cap B(z, 9\epsilon'/4)$ implies $w \in  B(x,2 (C_P+2)d(x,y))$.
By Lemma \ref{l:partition}, \eqref{e:chain6} and \hyperlink{vd}{$\operatorname{(VD)}$}, there exists $C_3>0$ such that for all $z \in V \cap B(x,2d(x,y)), r \le 2 C_P d(x,y)$, we have
\begin{equation}\label{e:chain7}
\Gamma(u,u)(B(z,r)) \le  C_2 \sum_{w \in B(z,r+5\epsilon'/4)}  \frac{m(B(w,\epsilon'/2))} {\Psi(\epsilon)} \le C_3 \frac{m(B(z,r))}{\Psi(\epsilon)}.
\end{equation}
Combining \eqref{e:chain4} and \eqref{e:chain7}, we obtain
\begin{equation}\label{e:chain8}
M^m_R\Gamma(u,u)(z)= \sup_{r <R} \frac{\Gamma(u,u)(B(z,r))}{m(B(z,r))}\le  \frac{C_3}{\Psi(\epsilon)},
\end{equation} 
for all $z \in \set{x,y}, 0<R \le 2 C_Pd(x,y)$.
By \eqref{e:chain8}, Lemma \ref{l:poin}, $\hat u (x)=0, \hat u(y)= N_\epsilon(x,y; B(x,A_0 d(x,y)))$, and \eqref{e:reg}, there exists $C_4 >0$ such that
\[ 
N_\epsilon(x,y; B(x,A_0 d(x,y)))^2 \le C_4 \frac{\Psi(d(x,y))}{\Psi(\epsilon)} \mbox{ for all $x,y \in X , \epsilon \le d(x,y)$.}
\]
Thus we obtain \eqref{e:chain}. 
\qed

\begin{corollary} \label{c:curve}	Let $(X,d,m,\mathcal{E},\mathcal{F})$ be a MMD space which satisfies  \hyperlink{vd}{$\operatorname{(VD)}$}, \hyperlink{pi}{$\operatorname{PI}(\Psi)$}, and \hyperlink{cap}{$\operatorname{cap}(\Psi)_\le$}, where $\Psi$ satisfies \eqref{e:reg}.
	Then there exists a curve from $x$ to $y$ in $B(x,2 A_0 d(x,y))$ for all $x,y \in X$, where $A_0$ is the constant in Theorem \ref{t:chain}.
\end{corollary}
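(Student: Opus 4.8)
The plan is to upgrade the quantitative bound of Theorem~\ref{t:chain} into an honest continuous curve by a hierarchical chaining construction that refines over a geometric sequence of scales. First I would record a scale-uniform consequence of \eqref{e:chain} and the upper bound in \eqref{e:reg}: there is an integer $N_* \ge 2$, depending only on the constants in the hypotheses, such that whenever $\epsilon < d(p,q) \le 2\epsilon$ there is an $\epsilon$-chain from $p$ to $q$ lying in $B(p, A_0 d(p,q))$ and having at most $N_*$ steps, since $N_\epsilon(p,q; B(p,A_0 d(p,q)))^2 \le C\,\Psi(d(p,q))/\Psi(\epsilon) \le C'\,(d(p,q)/\epsilon)^{\beta_2} \le C' 2^{\beta_2}$. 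Fix $x,y \in X$; we may assume $\rho := d(x,y) > 0$, the case $x=y$ being trivial.

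Next I would construct, by induction on $k$, an increasing sequence of finite sets $\{0,1\} = T_0 \subset T_1 \subset \cdots \subset [0,1]$ together with maps $\gamma_k : T_k \to X$ with $\gamma_{k+1}|_{T_k} = \gamma_k$ and $\gamma_k(0)=x$, $\gamma_k(1)=y$, such that for every pair $s<s'$ of consecutive points of $T_k$:
\begin{enumerate}[(i)]
\item $d(\gamma_k(s), \gamma_k(s')) \le 2^{-k}\rho$;
\item every $t \in T_{k+1} \cap [s,s']$ satisfies $d(\gamma_{k+1}(t), \gamma_k(s)) \le A_0\, d(\gamma_k(s), \gamma_k(s'))$;
\item the subintervals into which $T_{k+1}$ cuts $[s,s']$ have length between $N_*^{-1}(s'-s)$ and $\tfrac12(s'-s)$.
\end{enumerate}
For the inductive step, take each consecutive pair $(p,q) = (\gamma_k(s), \gamma_k(s'))$; by (i), $d(p,q) \le 2^{-k}\rho$. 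If $d(p,q) > 2^{-(k+1)}\rho$, apply the scale-uniform chaining above with $\epsilon = 2^{-(k+1)}\rho$ to get a chain $p = w_0, w_1, \dots, w_m = q$ in $B(p, A_0 d(p,q))$ with $2 \le m \le N_*$; otherwise put $m = 2$ and $(w_0,w_1,w_2) = (p,p,q)$, which is also a $2^{-(k+1)}\rho$-chain. In both cases adjoin to $T_{k+1}$ the points $s + \tfrac{j}{m}(s'-s)$, $j = 1,\dots,m-1$, and declare $\gamma_{k+1}$ to equal $w_j$ there. Properties (i)--(iii) are then immediate.

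Finally I would pass to the limit. Let $T = \bigcup_k T_k$; by (iii) the mesh of $T_k$ is at most $2^{-k}$, so $T$ is dense in $[0,1]$, and the $\gamma_k$ determine a single map $\gamma : T \to X$. Telescoping (ii) along the chain of nested intervals (of $T_0, T_1, \ldots$) containing a given point of $T$ shows that every point of $\gamma(T\cap I)$ lies within $A_0\rho\sum_{j\ge k}2^{-j} = A_0\rho\,2^{1-k}$ of $\gamma$ evaluated at the left endpoint of $I$, for each level-$k$ interval $I$; together with the lower bound $N_*^{-k}$ on interval lengths from (iii), this shows that $\gamma$ is uniformly continuous on $T$, hence (by completeness of $X$) extends to a continuous $\gamma : [0,1] \to X$ with $\gamma(0)=x$, $\gamma(1)=y$. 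Applying the same diameter estimate at level $k=1$ and using that the level-$0$-to-$1$ chain lies in the open ball $B(x,A_0\rho)$ with a definite margin (being a finite set) gives $\overline{\gamma(T)} \subseteq B(x, 2A_0\rho)$, whence $\gamma([0,1]) \subseteq B(x,2A_0\rho)$, as required.

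The main obstacle is just ensuring that a continuous limit exists: the approximating chains must have pieces of geometrically decaying diameter, and the reason this works is precisely that Theorem~\ref{t:chain} confines the refining chain of a pair $(p,q)$ to a ball whose radius is proportional to $d(p,q)$ — so over successive refinements at scales $2^{-k}\rho$ the piece-diameters form a convergent geometric series, yielding uniform continuity of $\gamma$ on $T$. The bookkeeping needed to keep the curve inside the open ball $B(x,2A_0\rho)$ rather than its closure is minor and handled as indicated above.
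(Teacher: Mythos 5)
Your proposal is correct and follows essentially the same route as the paper: iteratively refine chains at geometrically decreasing scales, using Theorem \ref{t:chain} both to bound the number of pieces per refinement and to confine each refined chain to a ball of radius $A_0$ times the current gap, so that the piece-diameters form a convergent geometric series and the approximations converge to a continuous curve staying in $B(x,2A_0 d(x,y))$. The only differences are bookkeeping (dyadic scales with a trivial padding chain and extension from a dense set of parameters, versus the paper's fixed ratio $\epsilon\in(0,1/2)$ and piecewise-constant approximations on $(N+1)$-adic intervals), which do not change the substance of the argument.
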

\begin{proof}
	By \eqref{e:reg} and Theorem \ref{t:chain}, for any $\epsilon \in (0,1/2)$, there exists $N \in \mathbb{N}$ such that
	\[
	N_{\epsilon d(x,y)} (x,y, B(x, A_0 d(x,y))) \le N, \quad \mbox{for all $x,y \in X$,}
	\]
	where $A_0$ is the constant in Theorem \ref{t:chain}. For the remainder of the proof, we fix $\epsilon \in (0,1/2)$ and $N \in \mathbb{N}$ as above.
	
	Let $x,y \in X$ a pair of distinct points. For each $k \in \mathbb{N}$, we define $\gamma_k:[0,1] \to X$ as follows. Let $z^{(1)}_0, z^{(1)}_1,\ldots, z^{(1)}_N$ 
	be a sequence of points in $B(x, A_0 d(x,y))$ such that $d(z^{(1)}_i,z^{(1)}_{i+1})< \epsilon d(x,y),$ with $z^{(1)}_0=x, z^{(1)}_N=y$. Let $\gamma_1:[0,1] \to X$ be the piecewise constant function on intervals defined by
	\[
	\gamma_1(t)= z^{(1)}_i, \quad \mbox{for all $i=0,\ldots,N-1$ and for all $i/(N+1)\le t < (i+1)/(N+1)$}
	\]
	and $\gamma_1(1)= y$. Similarly, for all $i=0,\ldots, N$ and we chose 
	$z^{(2)}_j, j=i(N+1), i(N+1)+1,\ldots, i(N+1)+N$ such that 
	$z^{(2)}_{i(N+1)}=z^{(1)}_i, z^{(2)}_{i(N+1)+N}=z^{(1)}_{i+1}, z^{(2)}_k \in B(z^{(1)}_i, A_0d(z^{(1)}_i,z^{(1)}_{i+1}))$, for $k= i(N+1),\ldots, i(N+1)+N$, $d(z^{(2)}_j, z^{(2)}_{j+1}) < \epsilon^2 d(x,y)$  and define 
	\[
	\gamma_2(t)= z^{(2)}_j,
	\]
	for all $j=0,1,\ldots,(N+1)^2-1$ and for all $j/(N+1)^2\le t < (j+1)/(N+1)^2$
	with $\gamma_2(1)=y$. We similarly define $\gamma_k:[0,1] \to X$ that is piecewise constant on intervals $[j/(N+1)^k, (j+1)/(N+1)^k)$, $j=0,1,\ldots, (N+1)^k -1$. 
	Since for all $t \in [0,1]$, $d(\gamma_k(t), \gamma_{k+1}(t)) < A_0 \epsilon^k d(x,y)$, the sequence $\set{\gamma_k(t), k \in \mathbb{N}}$ is Cauchy, and hence converges to say $\gamma(t) \in X$. 
	This limit defines a function $\gamma:[0,1] \to X$.
	Note that
	\[
	d(x,\gamma(t)) \le \sum_{k=0}^\infty  \epsilon^k d(x,y)= A_0 d(x,y)/(1-\epsilon) <2 A_0 d(x,y).
	\]
	If $ \abs{t_1 - t_2} \le \frac{1}{N^k}$ for some $k \in \mathbb{N}$, we have
	\begin{align*}
	d(\gamma(t_1),\gamma(t_2)) &\le d(\gamma_k(t_1), \gamma(t_1)) + d(\gamma_k(t_2), \gamma(t_2))+ d(\gamma_k(t_1),\gamma_k(t_2)) \\
	& \le 2 \left( \sum_{l=k}^\infty A_0 \epsilon^l d(x,y) \right) + 2 (A_0+1) \epsilon^k d(x,y)\\
	&\le 2(A_0+1) (1+ (1-\epsilon)^{-1})\epsilon^k d(x,y),
	\end{align*}
	which implies the continuity of $\gamma$. Hence $\gamma: [0,1] \to B(x,2A_0d(x,y))$ is continuous and $\gamma(0)=x, \gamma(1)=y$.
\end{proof}

We now have all tools at our disposal to prove the main result.\\
\noindent{\em Proof of Theorem \ref{t:annuluscurve}.} 
We choose $C_0=2C$, where $C$ is the constant in Proposition \ref{p:chain}. Let $\epsilon=\frac{r}{4 C A}$, where $A$ is the constant in \hyperlink{pi}{$\operatorname{PI}(\Psi)$}.  By Proposition \ref{p:chain}, for any $x \in X,r >0$ and $y,z \in B(x,2r) \setminus B(x,r)$ 
\[
N_\epsilon(y,z; B(x,Cr) \setminus B(x,r/C))<\infty.
\]
Let $\set{x_i}_{i=1}^N$ be an $\epsilon$-chain between $y$ and $z$ in $B(x,Cr) \setminus B(x,r/C)$.
By Corollary \ref{c:curve}, there exist curves $\gamma_i:[0,1] \to B(z_i,r/(2C)), i=0,\ldots,N-1$ from $z_i$ to $z_{i+1}$. Since $B(z_i,r/(2C)) \subset  B(x,C_0 r) \setminus B(x,r/C_0)$ for all $i=0,\ldots,N-1$, by concatenating the curves $\gamma_i,i=0,\ldots,N-1$ we obtain a curve from $y$ to $z$ in $B(x,C_0 r) \setminus B(x,r/C_0)$.
\qed

Under the assumptions of Theorem \ref{t:annuluscurve}, we can obtain a quantitative bound on $N_\epsilon$ as follows.
By the volume doubling property, the minimum number of balls of radii $r$ required to cover a ball of radius $R$ depends only $R/r$. Using this property, we could obtain an uniform bound (that does not depend on $x$ or $r$) on $N_{r/(4CA)}(y,z,B(x,Cr) \setminus B(x,r/C))$, in the proof of Theorem \ref{t:annuluscurve}. As a consequence, using Theorem \ref{t:chain}, for any $\epsilon >0, x \in X$, $0<r<\operatorname{diam}(X,d)/A$, we have
\[
N_{r/(4CA)}(y,z,B(x,Cr) \setminus B(x,r/C))^2 \lesssim \frac{\Psi(r)}{\Psi(\epsilon)}, \quad \mbox{for all $y,z \in B(x,2r) \setminus B(x,r)$.}
\]

\section{Applications}
For simplicity, we will assume that our MMD space is unbounded; that is, $\operatorname{diam}(X,d)=\infty$.
We recall a condition introduced in \cite{GS1,GS2}. 
\begin{definition}[Relatively connected annuli]
	{\rm
		We say that $(X,d)$ satisfies (RCA) (this stands for
		\emph{relatively connected annuli}) if there is a point $o$ and a constant $A$ such
		that for any $r > A$ and any two points $x, y \in X$ with $d(o, x) = d(o, y) =
		r$ there is a continuous path in $B (o, Ar) \setminus B (o, r /A)$ connecting $x$ to $y$. 
	}
\end{definition}

A weighted manifold is a Riemannian manifold $(M,g)$ equipped with a measure $\mu$ that has a smooth positive density with respect to the Riemannian measure. This space is equipped with a weighted Laplace operator that generalizes the Laplace-Beltrami operator and is symmetric with respect to the measure $\mu$. Such spaces naturally arise in the context of Doob $h$-transforms and Schr\"odinger operators. We refer the reader to \cite{Gri-wt} for a comprehensive survey on weighted manifolds and applications.

Motivated by weighted manifolds (and the corresponding weighted Laplace operator), we perturb a MMD space by a continuous function $w :X \to (0,\infty)$  by a family of admissible weights.
\begin{definition}[Admissible weight] \label{d:admiss}{
		{\rm
			Let $(X,d,m,\mathcal{E},\mathcal{F})$ be a MMD space and let $w:X \to (0,\infty)$ be continuous.
			We say that $w$ is an \emph{admissible weight} if the following conditions hold:
			\begin{enumerate}[(a)]
				\item There exist $o \in X$, $\alpha_1 ,\alpha_2 \in \mathbb{R}, C>1$ such that
				\begin{equation}\label{e:adm}
				C^{-1} \left( \frac{d(o,x)}{d(o,y)}\right)^{\alpha_1}  \frac{w(x)}{w(y)} \le C \left( \frac{d(o,x)}{d(o,y)}\right)^{\alpha_2}, 
				\end{equation} 
				for all $x,y \in X$ such that $d(o,y) \ge d(o,x)$.
				\item There exists a MMD space $(X,d, w\,dm, \mathcal{E}_w,\mathcal{F}_w)$, where $\mathcal{F}_w \subset \mathcal{F}_{\operatorname{loc}}$ and 
				\[
				\mathcal{E}_w(f,f)= \int_X w \, d\Gamma(f,f), \quad \mbox{for all $f \in \mathcal{F}_w$,}
				\]
				where $\mathcal{F}_{\operatorname{loc}}$ is as defined in \eqref{e:Floc}. Furthermore, $\mathcal{F} \cap \mathcal{C}_c(X)$ forms a core for the Dirichlet form $(\mathcal{E}_w,\mathcal{F}_w)$ on $L^2(w\,dm)$.
			\end{enumerate}
	}}
\end{definition}
In the context of manifolds, we refer to  \cite{GS2,Gri-wt} for examples of admissible weights.

We recall the definition of a generalized capacity bound introduced in \cite{GHL} based on a similar condition due to Andres and Barlow.
\begin{definition}[Generalized capacity estimate] \label{d:gcap} {\rm
		For open subsets $U,V$ of $X$ with $\overline{U} \subset V$, we say that
		a function $\varphi \in \mathcal{F}$ is a \emph{cutoff function} for $U \subset V$
		if $0 \le \varphi \le 1$, $\varphi=1$ on a neighborhood of $\overline{U}$ and $\operatorname{supp}_{m}[\varphi] \subset V$.
		Then we say that $(X,d,m,\mathcal{E},\mathcal{F})$ satisfies the
		\emph{Generalized capacity estimate} \hyperlink{Gcap}{$\operatorname{Gcap}(\Psi)$},
		if there exists $C_{S}>0$ such that the following holds:
		for each $x \in X$ and each $R,r>0, f \in \mathcal{F}$ there exists a cutoff function $\varphi \in \mathcal{F}$
		for $B(x,R) \subset B(x,R+r)$ such that  
		\begin{equation}\tag*{$\operatorname{Gcap}(\Psi)$}
		\int_{X} f^{2}\, d\Gamma(\varphi,\varphi)
		\leq C_S \int_{B(x,R+r) \setminus B(x,R)} \varphi^{2} \, d\Gamma(f,f)
		+ \frac{C_{S}}{\Psi(r)} \int_{B(x,R+r) \setminus B(x,R)} f^{2}\,dm.
		\end{equation}
		Here and in what follows, we always consider a quasi-continuous version of
		$f\in\mathcal{F}$, which exists by \cite[Theorem 2.1.3]{FOT} and is unique
		$\mathcal{E}$-q.e.\ (i.e., up to sets of capacity zero) by \cite[Lemma 2.1.4]{FOT},
		so that the values of $f$ are uniquely determined $\Gamma(g,g)$-a.e.\ for each $g\in\mathcal{F}$
		since $\Gamma(g,g)(N)=0$ for any Borel subset $N$ of $X$ of capacity zero by
		\cite[Lemma 3.2.4]{FOT}.
}\end{definition}
By choosing a function $f \in \mathcal{F}$ such that  $f \equiv 1$ in a neighborhood of $B(x,R+r)$ in the above definition, we note that \hyperlink{Gcap}{$\operatorname{Gcap}(\Psi)$} implies  \hyperlink{cap}{$\operatorname{cap}(\Psi)_\le$}.

We recall the definition of harmonic functions and the elliptic Harnack inequality.

\begin{definition}[Harmonic functions and elliptic Harnack inequality] {\rm 
		Let $(X,d,m,\mathcal{E},\mathcal{F})$ be a MMD space. A function $h \in \mathcal{F}$
		is said to be \emph{$\mathcal{E}$-harmonic} on an open subset $U$ of $X$, if
		\begin{equation}\label{e:harmonic}
		\mathcal{E}(h,f)=0,
		\end{equation}
		for all $f\in\mathcal{F}\cap\mathcal{C}_{\mathrm{c}}(X)$ with $\operatorname{supp}[f]\subset U$, where $\operatorname{supp}[f]$ denotes the support of $f$.
		
		We say that  a MMD space $(X,d,m,\mathcal{E},\mathcal{F})$ satisfies the \emph{elliptic Harnack inequality} (abbreviated as EHI), if there exist $C >1, \delta \in (0,1)$ such that for all $x \in X, r>0$ and for any non-negative harmonic function $h$ on the ball $B(x,r)$, we have
		\begin{equation} \label{EHI} \tag*{$\operatorname{EHI}$}
		\esssup_{B(x,\delta r)} h \le C \essinf_{B(x,\delta r)} h.
		\end{equation}
	}
\end{definition}
We recall the definition of a remote ball.
\begin{definition}[Remote ball]{\rm
		Fix $o \in X$ and $\epsilon \in (0,1]$. We say that a ball $B(x,r)$ is \emph{$\epsilon$-remote} with respect to $o$, if $r \ge \frac 1 2 \epsilon d(o,x)$.
	}
\end{definition}
As an application of the annular connectivity result in Theorem \ref{t:annuluscurve}, we obtain the following stability of elliptic Harnack inequality under perturbation by admissible weights.
\begin{theorem}  \label{t:weight}
	Let $(X,d,m,\mathcal{E},\mathcal{F})$ be an unbounded MMD space that satisfies  \hyperlink{vd}{$\operatorname{(VD)}$}, \hyperlink{pi}{$\operatorname{PI}(\Psi)$}, \hyperlink{Gcap}{$\operatorname{Gcap}(\Psi)$} and \hyperlink{fvg}{$\textup{FVG}(\Psi)$}, where $\Psi$ satisfies \eqref{e:reg}. Let $w \in \mathcal{C}(X), w: X \to (0,\infty)$ be an admissible weight. Then the corresponding weighted MMD space $(X,d,w\,dm,\mathcal{E}_w,\mathcal{F}_w)$ satisfies the elliptic Harnack inequality.
\end{theorem}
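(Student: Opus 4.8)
\textbf{Proof proposal for Theorem \ref{t:weight}.}
The plan is to verify that the perturbed space $(X,d,w\,dm,\mathcal{E}_w,\mathcal{F}_w)$ satisfies a list of conditions that are known to be stable under perturbation by an admissible weight and that together imply \ref{EHI}. The strategy is a localization/globalization argument: on \emph{remote balls} the weight $w$ is comparable to a constant up to a multiplicative factor depending only on the ``shell'' $B(o,2^{k+1})\setminus B(o,2^{k-1})$ it lives in, because of the two-sided power bound \eqref{e:adm}; hence on each such remote ball the weighted Dirichlet form, the weighted measure $w\,dm$, and the weighted capacity are all comparable (with uniform constants) to the unweighted ones. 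Consequently \hyperlink{vd}{$\operatorname{(VD)}$}, \hyperlink{pi}{$\operatorname{PI}(\Psi)$} and \hyperlink{Gcap}{$\operatorname{Gcap}(\Psi)$} transfer to the weighted space when restricted to remote balls, and the weighted space continues to satisfy \hyperlink{fvg}{$\textup{FVG}(\Psi)$} on remote balls as well (again the weight contributes only a bounded factor). The elliptic Harnack inequality is a local statement (it only involves a single ball $B(x,r)$ and harmonic functions thereon), so it suffices to establish it on every remote ball; the Harnack chaining needed to pass to arbitrary balls is exactly where annular connectivity enters.

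The key steps, in order, are: (1) Show that \eqref{e:adm} implies that on any remote ball $B(x,r)$ (with respect to $o$) one has $\sup_{B(x,r)} w \le C' \inf_{B(x,r)} w$ with $C'$ depending only on the constants in \eqref{e:adm} and $\epsilon$; this is an elementary computation with the power bounds, using $r \ge \tfrac12\epsilon\, d(o,x)$ to control $d(o,y)/d(o,z)$ for $y,z \in B(x,r)$. (2) Deduce that on remote balls the conditions \hyperlink{vd}{$\operatorname{(VD)}$}, \hyperlink{pi}{$\operatorname{PI}(\Psi)$}, \hyperlink{Gcap}{$\operatorname{Gcap}(\Psi)$} and \hyperlink{fvg}{$\textup{FVG}(\Psi)$} hold for the weighted space, with constants uniform over all remote balls; since \hyperlink{Gcap}{$\operatorname{Gcap}(\Psi)$} implies \hyperlink{cap}{$\operatorname{cap}(\Psi)_\le$}, the hypotheses of Theorem \ref{t:annuluscurve} are met by $(X,d,m,\mathcal{E},\mathcal{F})$, so $(X,d)$ satisfies (RCA) with respect to $o$ (apply Theorem \ref{t:annuluscurve} at radii $r_k = 2^k$ and observe the resulting annuli connectivity is precisely the (RCA) statement). (3) Invoke a known equivalence (from the Grigor'yan--Saloff-Coste framework \cite{GS1,GS2}, or the characterization of the elliptic Harnack inequality in \cite{BM}): a MMD space satisfying (RCA) for which \ref{EHI} holds on all remote balls, in fact satisfies \ref{EHI} globally. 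Concretely, one covers an arbitrary ball by remote balls and chains the remote-ball Harnack inequalities along a path inside an annulus provided by (RCA), the number of balls in the chain being bounded in terms of the volume doubling constant via the quantitative $N_\epsilon$ bound recorded after the proof of Theorem \ref{t:annuluscurve}.

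The main obstacle I expect is step (2) for \hyperlink{Gcap}{$\operatorname{Gcap}(\Psi)$}: unlike \hyperlink{pi}{$\operatorname{PI}(\Psi)$}, which only compares $L^2$-norms of gradients and functions and hence is obviously stable under a bounded multiplicative change of the energy measure and the reference measure, the generalized capacity estimate involves a cutoff function for an \emph{annular} region $B(x,R)\subset B(x,R+r)$, and one must ensure that the annulus $B(x,R+r)\setminus B(x,R)$ stays inside a single shell where $w$ is comparable to a constant. This forces one to work with cutoffs at scales $r$ small relative to $d(o,x)$, and to handle the genuinely non-remote regime (near $o$, or $R+r \gg d(o,x)$) separately --- there the full strength of \hyperlink{Gcap}{$\operatorname{Gcap}(\Psi)$} together with the admissibility assumption (b), which already postulates that $(X,d,w\,dm,\mathcal{E}_w,\mathcal{F}_w)$ is a bona fide MMD space with $\mathcal{F}\cap\mathcal{C}_c(X)$ as a core, is needed to patch the local cutoffs into a global one. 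A secondary technical point is verifying that harmonicity is a local notion insensitive to the globally-unbounded weight, i.e.\ that $h$ is $\mathcal{E}_w$-harmonic on an open $U$ iff it is, which follows from $\mathcal{E}_w(f,f)=\int_X w\,d\Gamma(f,f)$ and strong locality, so that the elliptic Harnack inequality is genuinely reducible to its remote-ball version.
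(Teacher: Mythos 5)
Your proposal follows essentially the same route as the paper's proof: the admissibility bound \eqref{e:adm} makes $w$ comparable to a constant on remote balls, so the hypotheses transfer there and the known implication (the paper cites the proof of Theorem 1.2 of \cite{GHL}) yields the elliptic Harnack inequality on remote balls, after which Theorem \ref{t:annuluscurve} supplies the annular connectivity that, via the Grigor'yan--Saloff-Coste chaining lemmas (Lemmas 6.3 and 6.4 of \cite{GS2}), upgrades the remote-ball Harnack inequality to all balls. The only ingredient the paper records that you leave implicit is the reverse volume doubling property, deduced from Corollary \ref{c:curve} together with \hyperlink{vd}{$\operatorname{(VD)}$}, which is among the hypotheses needed for the remote-ball step; your worry about the non-remote regime for \hyperlink{Gcap}{$\operatorname{Gcap}(\Psi)$} is unnecessary, since only the remote-ball (localized) versions of the conditions are ever used.
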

\begin{proof} 
	By Corollary \ref{c:curve}, $B(x,r) \setminus B(x,r/2)\neq \emptyset$ for any ball $B(x,r)$. This along with \hyperlink{vd}{$\operatorname{(VD)}$} implies the following reverse volume doubling property: there exist $C_1,\alpha>0$ such that
	\[
	\frac{m(B(x,R))}{m(B(x,r))} \ge C_1^{-1} \left(\frac R r\right)^\alpha, \quad\mbox{for any $x \in X, 0<r \le R$.}
	\]
	Let $o \in X$ be the point as in Definition \ref{d:admiss} and let $\mu = w \,dm$ denote the weighted measure. 
	By \eqref{e:adm}, the weight $w$ is comparable to a constant on any $\epsilon$-remote ball with respect to $o$ for any $\epsilon \in (0,1]$. By choosing $\epsilon \in (0,1]$ small enough, the MMD space  $(X,d,w\,dm,\mathcal{E}_w,\mathcal{F}_w)$ satisfies the volume doubling property, reverse volume doubling property, \hyperlink{Gcap}{$\operatorname{Gcap}(\Psi)$} and \hyperlink{fvg}{$\textup{FVG}(\Psi)$} for all remote balls. 
	By \cite[proof of Theorem 1.2]{GHL}, the MMD space  $(X,d,w\,dm,\mathcal{E}_w,\mathcal{F}_w)$ satisfies \hyperlink{ehi}{$\operatorname{EHI}$} for all remote balls.
	By applying \cite[Lemma 6.4]{GS2} for the metric measure space $(X,d,m)$, we obtain the annuli covering condition in \cite[Definition 6.2]{GS2}. 
	By Theorem \ref{t:annuluscurve} and \cite[Lemma 6.3]{GS2}, we obtain that the the MMD space  $(X,d,w\,dm,\mathcal{E}_w,\mathcal{F}_w)$ satisfies \hyperlink{ehi}{$\operatorname{EHI}$} for all  balls.
\end{proof}
\begin{remark}
	We remark that the \hyperlink{fvg}{$\textup{FVG}(\Psi)$} condition in Theorem \ref{t:weight} is necessary. For example, for Brownian motion on $\mathbb{R}$, the weight $w(x)= (1+x^2)^{\alpha/2}$ where $\alpha >1$ fails to satisfy the Liouville property and hence the elliptic Harnack inequality. This is because, the diffusion corresponding to the Dirichlet form $(\mathcal{E}_w,\mathcal{F}_w)$ has two transient ends at $\pm \infty$. Hence, the probability that the diffusion eventually ends up in one of them (say $+\infty$) is a non-constant positive harmonic function.
\end{remark}

\bibliographystyle{amsalpha}

\end{document}